\documentclass[11pt, a4paper]{amsart}
\usepackage[utf8]{inputenc}
\usepackage[english]{babel}


\usepackage{spectralsequences}
\usepackage[margin=2.5cm]{geometry}
\usepackage{mathrsfs}
\usepackage{enumitem}
\usepackage{pinlabel}
\usepackage{graphicx}
\usepackage{amsmath,amsfonts,amssymb,mathtools}
\usepackage{tikz,tikz-cd}
\usepackage{subcaption}
\usepackage{hyperref}
\usepackage{xcolor}
\usepackage{soul}
\usetikzlibrary{calc,patterns,arrows,shapes.arrows,intersections}
\usetikzlibrary{decorations}

\newtheorem{thmintro}{Theorem}

\newtheorem{thm}{Theorem}[section]

\newtheorem{prop}[thm]{Proposition}
\newtheorem{lem}[thm]{Lemma}
\newtheorem{cor}[thm]{Corollary}
\newtheorem{rem}[thm]{Remark}
\newtheorem{example}[thm]{Example}

\newtheorem{defn}[thm]{Definition}

\newtheorem*{question}{Question}

\DeclareMathOperator{\Nerve}{\mathrm{N}}

\DeclareMathOperator{\Tor}{\mathrm{Tor}}

\newcommand{\Uast}{\mathcal{U}^{\mathrm{ast}}}

\newcommand{\cU}{\mathcal{U}}

\newcommand{\Hh}{\mathrm{H}^h}
\newcommand{\UC}{\ddot{\mathrm{C}}}
\newcommand{\UH}{\ddot{\mathrm{H}}}
\newcommand{\hh}{\mathrm{H}}

\newcommand{\N}{\mathbb{N}}

\newcommand{\Z}{\mathbb{Z}}

\newcommand{\F}{\mathbb{F}}

\newcommand{\cH}{\mathcal{H}}

\newcommand{\cZ}{\mathcal{Z}}

\newcommand{\tG}{{\tt G}}

\newcommand{\e}{\varepsilon}
\newcommand{\dd}{\delta}

\newcommand{\dH}{{\rm DH}}
\newcommand{\BH}{\ddot{\mathrm{B}}}
\renewcommand{\DH}{{\rm DH}}

\definecolor{bluedefrance}{rgb}{0.19, 0.55, 0.91}

\definecolor{aquamarine}{rgb}{0.5, 1.0, 0.83}
\definecolor{princetonorange}{rgb}{1.0, 0.56, 0.0}
\definecolor{caribbeangreen}{rgb}{0.0, 0.8, 0.6}
\definecolor{bunired}{rgb}{0.8, 0.0, 0.0}
\definecolor{cdgreen}{rgb}{0.0, 0.42, 0.24}
\definecolor{lavender(floral)}{rgb}{0.71, 0.49, 0.86}

\title[\"Uberhomology and double homology]{Bridging between \"uberhomology and double homology}

\author{Luigi Caputi}
\author{Daniele Celoria}
\author{Carlo Collari}
\date{}

\begin{document}

\maketitle

\begin{abstract}
We establish an isomorphism between the $0$-degree \"uberhomology and the double homology of finite simplicial complexes, using a Mayer-Vietoris spectral sequence argument. We clarify the correspondence between these theories by providing  examples and some consequences; in particular, we show that \"uberhomology groups detect the standard simplex, and that the double homology's diagonal is related to the connected domination polynomial.
\end{abstract}

\section*{Introduction}

Often times in mathematics, coincidences  can reveal hidden patterns or connections, leading to new insights. The aim of this paper is precisely to explore such a coincidence, involving two independently defined homology theories.

The first of the two objects we wish to compare is related to moment-angle complexes; these are central objects in the study of toric topology -- sitting at the intersection of topology, symplectic and algebraic geometry, and combinatorics, see for instance \cite{Pan13, MR3363157}.

The \textit{double homology} of the moment-angle complex $\cZ_K$, associated to a simplicial complex~$K$, was defined in \cite{LIMONCHENKO2023109274} by introducing a differential on the ordinary (simplicial) homology of~$\cZ_K$.
The study of this homology is motivated in part by stability properties of the homology $\hh_*(\cZ_K)$ with respect to deformations of $K$. This recently sparked some interest in computing double homology groups for classes of simplicial complexes~ \cite{ruiz2024sphere}, as well as finding information about its structure and, in particular, about its rank~\cite{ruiz2024sphere, han2023moment, zhang2024rank}.\\

The second algebraic object we are going to look at is called \textit{\"uberhomology}; this is a combinatorially defined, triply graded homology of simplicial complexes, introduced in~\cite{uberhomology}. 
It was later proved in~\cite{domination}, that \"uberhomology is a particular instance of \emph{poset homology}, as given in~\cite{chandler2019posets, primo}.

Restricting the triple grading yields a doubly graded homology, called $0$-degree \"uberhomology, denoted by $\BH$.
Several properties of this latter homology are indeed paralleled in double homology; for instance, the \"uberhomology of a triangulated manifold contains the fundamental class (\textit{cf.}~\cite[Thm.~7.10]{uberhomology} and~\cite[Prop.~7.2]{LIMONCHENKO2023109274}), it is well-behaved under cones and suspensions (\textit{cf}.~\cite[Prop.s~5.7  and~5.8]{MV} and \cite[Thm.~6.3]{LIMONCHENKO2023109274}), and a graph's \"uberhomology vanishes if it contains a leaf (\textit{cf}.~\cite[Prop.~5.2]{domination} and~\cite[Thm.~6.7]{LIMONCHENKO2023109274}). 

Finally, \"uberhomology is explicitly computable --albeit only for small simplicial complexes-- using freely available Python code~\cite{githububerjulius}.\\

In this paper, we establish an isomorphism between the $0$-degree \"uberhomology $\BH_*^*$ and the double homology $\DH_{*,*}$ (denoted by $\mathrm{HH}$ in \cite{LIMONCHENKO2023109274}) of finite simplicial complexes.

\begin{thmintro}\label{thm:comparisonintro}
Let $K$ be a finite and connected simplicial complex with $m$ vertices. Then, there is an isomorphism
\[ \BH_{i}^{j}(K) \cong {\rm DH}_{i-j+1,2j}(\cZ_K)\]
for each $i\neq 0,-1$. Equivalently, 
\[ {\rm DH}_{-k,2l}(\cZ_K) \cong \BH_{l-k-1}^{l}(K)\ ,\]
for each choice of $l$ and $k$ such that $ l-k \neq 0,  1$.
\end{thmintro}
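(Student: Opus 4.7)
The plan is to realise both homologies as the cohomology of a common cubical complex built from full subcomplexes of $K$. On the double homology side, the Hochster-type formula for moment-angle complexes gives a decomposition
\[
\hh_{-k,\,2l}(\cZ_K;\K) \;\cong\; \bigoplus_{\substack{J \subseteq V(K) \\ |J|=l}} \tilde{\hh}^{\,l-k-1}(K_J;\K),
\]
and a direct inspection of the differential defining $\DH$ from \cite{LIMONCHENKO2023109274} shows that, under this identification, it is the signed sum over $v\in J$ of the maps $\tilde{\hh}^{\,l-k-1}(K_J) \to \tilde{\hh}^{\,l-k-1}(K_{J\setminus\{v\}})$ induced by the inclusions of full subcomplexes. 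After fixing an ordering of $V(K)$, this realises $\DH_{-k,2l}(\cZ_K)$ as the cohomology of the total complex of an $m$-dimensional cubical diagram whose vertices carry the groups $\tilde{\hh}^{l-k-1}(K_J)$.

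For überhomology, I would exploit the identification of $\BH$ with poset homology from \cite{domination} together with the Mayer-Vietoris spectral sequence developed in \cite{MV}. By construction, $0$-degree überhomology is computed from the $\underline{0}$-component of a chain complex indexed by colorings of $V(K)$, which splits naturally according to the subset $J\subseteq V(K)$ of $1$-colored vertices, with each local summand depending only on the reduced (co)chain complex of the corresponding full subcomplex $K_J$. Running the Mayer-Vietoris spectral sequence collapses the local data to $\tilde{\hh}^{\,l-k-1}(K_J;\K)$, so that the chain complex computing $\BH_{i}^{j}(K)$ in cardinality filtration $|J|=j$ and local degree $j-i-1$ is precisely the one computed above for $\DH$ with $j=l$ and $i=k$.

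The final step is to match the differentials: in both cases these are signed sums of the morphisms induced by inclusions $K_{J\setminus\{v\}}\hookrightarrow K_J$ on reduced cohomology, so a compatible choice of orientations on the power set of $V(K)$ produces an isomorphism of chain complexes, and hence of cohomology groups. The excluded degrees $i\in\{0,-1\}$, equivalently $l-k\in\{0,1\}$, correspond exactly to the bidegrees where $\tilde{\hh}^{-1}(\emptyset)$ and $\tilde{\hh}^{0}$ of a connected full subcomplex behave anomalously with respect to reduction; outside this range the comparison is clean, and the connectedness of $K$ is what guarantees that the correction is confined to these indices.

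The main obstacle will be the careful bookkeeping of signs and the reconciliation of the two natural conventions (cohomological on full subcomplexes versus vertex-coloring based on the überhomology side), so that the differentials agree on the nose rather than merely up to a sign or coboundary. A secondary subtlety is isolating precisely the exceptional range $i\in\{0,-1\}$, where the passage between reduced and unreduced (co)homology of $K_J$ becomes sensitive to whether $K$ is connected; this is exactly the source of the hypothesis in the statement.
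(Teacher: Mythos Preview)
Your strategy is correct and coincides with the paper's: both $\DH$ and $\BH$ are realised as the homology of the Boolean cube $B(m)$ with coefficients $J\mapsto \widetilde{\hh}_*(K[J])$ (resp.\ $J\mapsto \hh_*(K[J])$), and the theorem then follows because reduced and unreduced homology agree in positive degrees. The paper packages this by identifying each side with the $E^2$-page of the (reduced, resp.\ unreduced) augmented Mayer--Vietoris spectral sequence for the anti-star cover $\Uast$, quoting the $\BH$ side directly from \cite{MV}; the actual proof is then one line.

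Two concrete details in your sketch are wrong, however, and would derail the argument if left uncorrected. First, the double-homology differential of \cite{LIMONCHENKO2023109274} \emph{adds} a vertex: it is the signed sum of pushforwards $\widetilde{\hh}_*(K[I])\to\widetilde{\hh}_*(K[I\cup\{v\}])$ in homology. Your description, with maps $\widetilde{\hh}^{*}(K_J)\to\widetilde{\hh}^{*}(K_{J\setminus\{v\}})$ in cohomology, is the dual construction and computes double \emph{co}homology, which need not agree with $\DH$ over~$\Z$. Second, the local degree carried by $\BH^j_i$ is $i$, not $j-i-1$: by definition $\BH^j_i$ places $\hh_i(K[J])$ at each $J$ with $|J|=j$, with edge maps the pushforwards along $K[J]\hookrightarrow K[J\cup\{v\}]$. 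Once both points are fixed the two cubical complexes have literally the same modules and the same inclusion-induced differentials (one with reduced, one with unreduced $\hh_i$), and the exclusion $i\notin\{0,-1\}$ is precisely the range where $\hh_i\cong\widetilde{\hh}_i$; connectedness of $K$ plays no role in this step and is only a standing hypothesis in the definition of~$\BH$.
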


To prove Theorem~\ref{thm:comparisonintro} (see Theorem~\ref{thm:comparison}), we observe that, up to a regrading, there is an identification between the double homology of $\cZ_K$ and the second page of a Mayer-Vietoris spectral sequence (Theorem~\ref{thm:double and MVss}). 
\"Uberhomology was also shown to be the second page of a Mayer-Vietoris spectral sequence~\cite{MV}. Theorem~\ref{thm:comparisonintro} follows, since the spectral sequences at hand are associated to the same cover -- the anti-star cover.

In~\cite[Rmk.~3.11]{zbMATH07820492} it was noted that double homology is the second page of a spectral sequence.
Further, in \cite[Section~4]{CVRS}, seing double homology as a page of a spectral sequence was leveraged to prove a vanishing criterion. 
A connection between double homology and the Mayer-Vietoris spectral sequence was used in \cite{AB24} (with particular reference to Remark~3.11 and Section~6) to compute higher operations on the cohomology of $\cZ_K$. 

Finally, we observe that Theorem~\ref{thm:comparisonintro} can be effectively used in both directions. For instance, on one hand, it implies that \"uberhomology detects the standard simplex (Corollary~\ref{cor:uber detects delta}). On the other hand, we use Theorem~\ref{thm:comparisonintro} to prove that the diagonal part of double homology is related to the connected domination polynomial of graphs (Corollary~\ref{cor:condom}).\\

With an eye on \"uberhomology, we  conclude with the following question:
\begin{question}
The isomorphism provided by Theorem~\ref{thm:comparisonintro} only involves \"uberhomology's lowest weighting degree. Is there a way to interpret the full \"uberhomology groups in terms of properties of moment-angle complexes?
\end{question}

\subsection*{Acknowledgements:} 
The authors are grateful to INdAM-GNSAGA.
LC~was supported by the Starting Grant 101077154 ``Definable Algebraic Topology" from the ERC.
CC~acknowledges the MUR-PRIN project 2022NMPLT8 and  the MIUR Excellence Department Project awarded to the Department of Mathematics, University of Pisa, CUP I57G22000700001. 
During the final writing stages, CC was partially supported by INdAM's ``mensilità estero'' grant, and wishes to thank Maciej Borodzik and IMPAN for their hospitality.

\section{$0$-degree  \"uberhomology }\label{sec:uber}

We start by recalling the definition of \"uberhomology, following~\cite{uberhomology, domination}. 

Let $K$ be a finite and connected simplicial complex on $m$ vertices, say $V(K)=\{v_1,\dots,v_m\}$. The ordering of the vertices will not affect the following discussion. 

A \emph{bicolouring} $\varepsilon$ on $K$ is a map $\varepsilon\colon V(K) \to \{0,1\}$. As a visual aid, we will sometimes identify~$0$ with white and $1$ with black.
A \emph{bicoloured simplicial complex} is a pair $(K,\e)$, consisting of a simplicial complex $K$ equipped with the bicolouring~$\e$.
Given a $n$-dimensional simplex~$\sigma$ in $(K,\e)$, define its $\e$-\emph{weight} as
\begin{equation}\label{eq:weight}
w_\varepsilon(\sigma )\coloneqq 
n+1-\sum_{v_i\in V(\sigma)} \varepsilon(v_i) \ .
\end{equation}
Equivalently, $w_\varepsilon(\sigma )$ is the number of $0$-coloured vertices in $\sigma$.
For a fixed bicolouring~$\e$, the weight in Equation~\eqref{eq:weight} induces a filtration of the simplicial chain complex~$C_*(K;\Z)$ associated to~$K$. 
More explicitly, set 
\[ \mathscr{F}_j (K, \e )  \coloneqq \Z \langle\ \sigma \mid w_{\e}(\sigma) \leq j\ \rangle \subseteq C_*(K;\Z) \ . \]
The simplicial differential $\partial$ respects this filtration, and it can be decomposed as the sum of two differentials (\emph{cf}.~\cite[Lemma~2.1]{uberhomology}); one which preserves the $w_{\e}$, denoted by~$\partial_h$, and one which decreases it by one.   Call $(C(K,\e),\partial_h)$ the bigraded chain complex, whose underlying module is $C(K;\Z)$; the first degree is given by simplices' dimensions, while the weight $w_\e$ gives the second. 
The $\e$-\emph{horizontal homology} $\Hh(K,\e)$ of $(K,\e)$ is the homology of the bigraded chain complex $(C(K,\e),\partial_h)$. In other words, $\Hh(K,\e)$ is the homology of the associated graded object for the filtration $\mathscr{F}_j (K, \e )$. 

Bicolourings on $K$ can be canonically identified with elements of the Boolean poset~$B(m)$ on a set with $m$ elements (partially ordered by inclusion).
Let $\e$ and $\e'$ be two bicolourings on $K$, differing only on the vertex $v_i$; assume further that $\e(v_i) = 0$ and~$\e'(v_i) =1$.  Denote by $d_{\e, \e'} $ the weight-preserving part of the identity map $\mathrm{Id} \colon \mathrm{H}^h(K,\e) \to \mathrm{H}^h(K,\e')$. By slightly abusing the notation, $d_{\e, \e'} $ can be seen to coincide with the linear extension of the map 
\[ d_{\e, \e'} (\sigma) = \begin{cases}(-1)^{s(\e, \e')}\sigma & \text{if }w_{\e}(\sigma) = w_{\e'}(\sigma) \\ 0 &\text{otherwise}\end{cases} \ ,  \]
where $s(\e, \e') = \left\vert \{ j \in V\; : \; j<i,\; \e(v_j)=1\}\right\vert$ -- see~\cite[Section~6]{uberhomology} and~\cite[Section~2]{domination}.
Note that $d_{\e, \e'} (\sigma) = 0$ occurs if and only if $w_{\e}(\sigma) = w_{\e'}(\sigma)-1$.

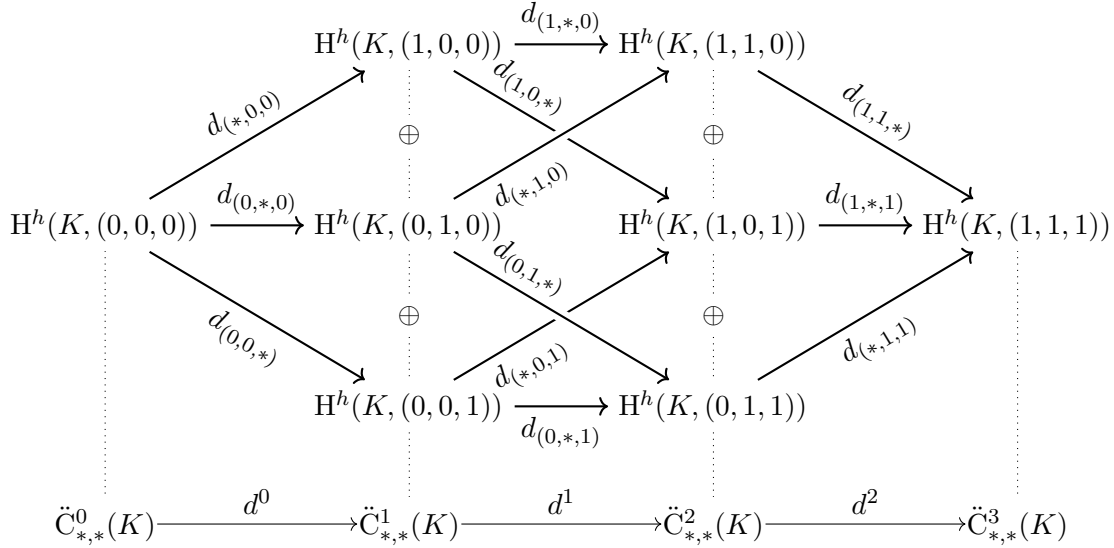
\begin{figure}[ht]
\begin{tikzpicture}[scale = .8]
\draw[dotted] (0,0) -- (0,-4.5) node[below] {$\UC_{*,*}^0(K)$};

\draw[dotted] (5,3) -- (5,-4.5) node[below] {$\UC_{*,*}^1(K)$};

\draw[dotted] (10,3) -- (10,-4.5) node[below] {$\UC_{*,*}^2(K)$};

\draw[dotted] (15,0) -- (15,-4.5) node[below] {$\UC_{*,*}^3(K)$};

\node[below] (uc0) at (0,-4.5) {\phantom{$\UC^0(K)$}};
\node[below] (uc1) at (5,-4.5) {\phantom{$\UC^0(K)$}};
\node[below] (uc2) at (10,-4.5) {\phantom{$\UC^0(K)$}};
\node[below] (uc3) at (15,-4.5) {\phantom{$\UC^0(K)$}};

\draw[->] (uc0) -- (uc1) node[midway, above] {$d^0$};
\draw[->] (uc1) -- (uc2) node[midway, above] {$d^1$};
\draw[->] (uc2) -- (uc3) node[midway, above] {$d^2$};

\node[fill, white] at (5,1.5){$\oplus$};
\node[fill, white] at (5,-1.5){$\oplus$};
\node  at (5,1.5){$\oplus$};
\node  at (5,-1.5){$\oplus$};

\node[fill, white] at (10,1.5){$\oplus$};
\node[fill, white] at (10,-1.5){$\oplus$};
\node  at (10,1.5){$\oplus$};
\node  at (10,-1.5){$\oplus$};

\node[fill, white] at (0,0) {${\Hh(K,(0,0,0))}$};

\node[fill, white] at (5,3){${\Hh(K,(1,0,0))}$};
\node[fill, white] at (5,0){${\Hh(K,(0,1,0))}$};
\node[fill, white] at (5,-3){${\Hh(K,(0,0,1))}$};

\node[fill, white] at (10,3) {${\Hh(K,(1,1,0))}$};
\node[fill, white] at (10,0) {${\Hh(K,(1,0,1))}$};
\node[fill, white] at (10,-3) {${\Hh(K,(0,1,1))}$};

\node[fill, white] at (15,0) {${\Hh(K,(1,1,1))}$};

\node (a) at (0,0) {${\Hh(K,(0,0,0))}$};

\node (b1) at (5,3) {${\Hh(K,(1,0,0))}$};
\node (b2) at (5,0) {${\Hh(K,(0,1,0))}$};
\node (b3) at (5,-3){${\Hh(K,(0,0,1))}$};

\node (c1) at (10,3) {${\Hh(K,(1,1,0))}$};
\node (c2) at (10,0) {${\Hh(K,(1,0,1))}$};
\node (c3) at (10,-3) {${\Hh(K,(0,1,1))}$};

\node (d) at (15,0) {${\Hh(K,(1,1,1))}$};

\draw[thick, ->] (a) -- (b1) node[midway,above,rotate =31] {$d_{(*,0,0)}$}; 
\draw[thick, ->] (a) -- (b2) node[midway,above] {$d_{(0,*,0)}$}; 
\draw[thick, ->] (a) -- (b3) node[midway,below,rotate =-29] {$d_{(0,0,*)}$}; 

\draw[thick, ->] (b1) -- (c1) node[midway,above] {$d_{(1,*,0)}$}; 
\draw[thick, ->] (b1) -- (c2) node[midway,above left,rotate =-29] {$d_{(1,0,*)}$}; 

\draw[thick, ->] (b3) -- (c2) node[midway,below left,rotate =31] {$d_{(*,0,1)}$}; 
\draw[thick, ->] (b3) -- (c3) node[midway,below] {$d_{(0,*,1)}$}; 

\draw[line width = 5, white] (b2) -- (c1) ; 
\draw[line width = 5, white] (b2) -- (c3) ; 
\draw[thick, ->] (b2) -- (c1) node[midway,below left,rotate =31] {$d_{(*,1,0)}$} ; 
\draw[thick, ->] (b2) -- (c3) node[midway,above left,rotate =-29] {$d_{(0,1,*)}$}; 

\draw[thick, <-] (d) -- (c1) node[midway,above,rotate =-29] {$d_{(1,1,*)}$}; 
\draw[thick, <-] (d) -- (c2) node[midway,above] {$d_{(1,*,1)}$}; 
\draw[thick, <-] (d) -- (c3) node[midway,below,rotate =31] {$d_{(*,1,1)}$}; 
\end{tikzpicture}

\caption{For a simplicial complex $K$ with $3$ vertices, the horizontal homologies of $K$ are placed over the vertices of a 3d cube, whose edges are components of the \"uberhomology differentials.}
\label{fig:cubo}
\end{figure}

For a given bicolouring $\e$ on $K$,  set $\ell(\e) \coloneqq \sum_{j} \e(v_j) $. The \emph{$j$-th \"uber chain module} is then defined as 
\begin{equation}\label{eq:uberchain}
\UC^{j}(K;\Z) = \bigoplus_{\ell(\e) = j} \Hh(K,\e;\Z) \ .
\end{equation}
By \cite[Proposition~6.2]{uberhomology}, the map 
\begin{equation}\label{eq:uberdiff}
\ddot{d}^j\coloneqq \sum_{\ell(\e) = j} d_{\e,\e'}\colon \UC^{j}(K;\Z)\to \UC^{j+1}(K;\Z)
\end{equation}
is a differential of degree $1$, turning $\left(\UC^{*}(K;\Z), \ddot{d}\right)$ into a cochain complex.
A schematic summary for the construction of the \"uber chain complex is presented in~Figure~\ref{fig:cubo}.

\begin{defn}\label{def:uber}
The \emph{\"uberhomology} $\UH^*(K)$ of a finite and connected simplicial complex $K$ is the homology of the complex $\left(\UC^{*}(K;\Z), \ddot{d}\right)$.
\end{defn}

\"Uberhomology groups can be endowed with two extra gradings, yielding a triply graded module. Indeed, the differential $\ddot{d}$ preserves both the simplices' weight and dimension. 
The notation for the three gradings on the \"uberhomology is as follows: $\UH^j_{k,i}(K)$ denotes the component of the homology generated by simplices of dimension $i$, with $k$ vertices of colour $0$, and whose (\"uber)homological degree is $j$.

\begin{rem}\label{uberposet}
It was observed in~\cite{domination} that the definition of \"uberhomology can be rephrased in terms of poset homology~\cite{chandler2019posets, primo}. 
With this interpretation, the choice of the signs in the definition of $d_{\e, \e'}$ is a sign assignment in the sense of~\cite[Definition~2.9]{domination}, and different sign assignments yield isomorphic homology groups -- see~\cite[Remark~2.10]{domination} and~\cite[Example~3.15]{primo}. Moreover, Definition~\ref{def:uber} was extended to general coefficients in~\cite{domination}.
\end{rem}

As mentioned above, the \"uberdifferential $\ddot{d}$ preserves the $(k,i)$-bidegree. In particular,  specialising to the component of \"uberhomology of weight $0$ yields a bigraded homology.

\begin{defn}
For a simplicial complex $K$, define the $0$-degree \"uberhomology to be the bigraded homology
\[
\BH^j_i (K) \coloneqq \UH^j_{0,i}(K) \ .
\]
\end{defn}

In view of Remark~\ref{uberposet}, we can provide an alternative description of $\BH(K)$ as poset homology: 

\begin{lem}
The bigraded homology $\BH^*_i(K)$ is the poset homology of the Boolean poset $B(m)$, with coefficients in the $i$-th simplicial homology functor.
\end{lem}

\begin{proof}
    Given a subset $I\subseteq V(K)$, denote by~$K[I]\subseteq K$ the sub-complex induced by $I$. 
We include the case $I = \emptyset$, and set $K[\emptyset] = \emptyset$ to be the empty simplicial complex.
For $\varepsilon \in \Z_2^m$ define $I_\varepsilon$ as the  
set of $1$-coloured vertices with respect to $\varepsilon$. Observe that the weight 0 part of the  horizontal homology $\Hh(K,\e;\Z)$ is the simplicial homology of $K[I_\e]$. By Equation~\eqref{eq:uberchain}, the homology $\BH^*_i(K)$ is then obtained by decorating each vertex~$\varepsilon$ in the Boolean poset~$B(m)$ with the $i$-\textit{th} homology~$ \mathrm{H}_i (K[I_\varepsilon])$ of $K[I_\varepsilon]$; 
the differentials associated to the cube's edges are induced by inclusion, as in Equation~\eqref{eq:uberdiff}. Therefore, if $m=|V(K)|$, we obtain a functor
\[
\mathcal{H}_i\colon B(m)\to \mathbf{Ab}
\]
which associates to each $\e$ the homology $ \mathrm{H}_i (K[I_\varepsilon])$.
By definition of poset homology -- see also \cite[Proposition~2.14]{domination} -- we get that the \"uberhomological construction computes the poset homology of the Boolean poset~$B(m)$, with coefficients in $\mathcal{H}_i$. This proves the statement.
\end{proof} 

\begin{example}
The chain complex for the homology $\BH^j_i(\partial \Delta^2)$ of the boundary of $\Delta^2$ is shown in Figure~\ref{fig:triang}. This complex is supported in homological degrees between $1$ and $3$, and simplicial degrees $0$ and $1$. In degree $i=0$, it is isomorphic to the simplicial chain complex associated to $\partial \Delta^{2}$, while in degree $i=1$ there are only trivial differentials, and a unique non-trivial summand in degree $j=3$. It follows that 
\[ \BH^{j}_{i}(\partial\Delta^2) = \begin{cases} \Z  & \text{if }(j,i) =  (1,0),(3,1),\\ 0 & \text{otherwise.}\end{cases}\]
More explicitly, the generator in bidegree $(1,0)$ is spanned by the direct sum of the three connected components identified by a single black vertex (see the first column of Figure~\ref{fig:triang}). The other generator coincides with the fundamental class of $\partial \Delta^2$, seen as a triangulation of $S^1$. 

\begin{figure}[h!]
\begin{tikzpicture}[scale = .75]
\draw[dotted] (0,0) -- (0,-5.5) node[below] {\phantom{$\UC^0 (K)$}};

\draw[dotted] (5,3) -- (5,-5.5) node[below] {\phantom{$\UC^0 (K)$}};

\draw[dotted] (10,3) -- (10,-5.5) node[below] {\phantom{$\UC^0 (K)$}};

\draw[dotted] (15,0) -- (15,-5.5) node[below] {\phantom{$\UC^0 (K)$}};

\node[below] (uc0) at (0,-5.5) {{$_{\phantom{(0)}}0^{\phantom{(0)}}$}};
\node[below] (uc1) at (5,-5.5) {{$\Z_{(0)}^3$}};
\node[below] (uc2) at (10,-5.5) {{$\Z_{(0)}^3$}};
\node[below] (uc3) at (15,-5.5) {{$\Z_{(0)}^{\phantom{(0)}}\oplus \Z^{\phantom{(0)}}_{(1)}$}};

\draw[->] (uc0) -- (uc1) node[midway, above] {};
\draw[->] (uc1) -- (uc2) node[midway, above] {};
\draw[->] (uc2) -- (uc3) node[midway, above] {};

\node[fill, white] at (5,1.5){$\oplus$};
\node[fill, white] at (5,-1.5){$\oplus$};
\node  at (5,1.5){$\oplus$};
\node  at (5,-1.5){$\oplus$};

\node[fill, white] at (10,1.5){$\oplus$};
\node[fill, white] at (10,-1.5){$\oplus$};
\node  at (10,1.5){$\oplus$};
\node  at (10,-1.5){$\oplus$};

\node[fill, white] at (0,0) {${\rm H}_*\left(\text{\raisebox{-1em}{\begin{tikzpicture}[scale =1, very thick]
    \node (a) at (0,0) {} ;
    \node (b) at (1,0) {};
    \node (c) at (.5, .866) {};
    \draw[white] (0,0) circle (0.05) ;
    \draw[white] (1,0) circle (.05);
    \draw[white] (.5, .866) circle (.05);
    \draw[white] (a) -- (b);
    \draw[white] (c) -- (b);
    \draw[white] (a) -- (c);
    \end{tikzpicture}}}\right)$};

\node[fill, white] at (5,3){${\rm H}_*\left(\text{\raisebox{-1em}{\begin{tikzpicture}[scale =1, very thick]
    \node (a) at (0,0) {} ;
    \node (b) at (1,0) {};
    \node (c) at (.5, .866) {};
    \draw[, white] (0,0) circle (0.05) ;
    \draw[, white] (1,0) circle (.05);
    \draw[, white] (.5, .866) circle (.05);
    \draw[white] (a) -- (b);
    \draw[white] (c) -- (b);
    \draw[white] (a) -- (c);
    \end{tikzpicture}}}\right)$};
\node[fill, white] at (5,0){${\rm H}_*\left(\text{\raisebox{-1em}{\begin{tikzpicture}[scale =1, very thick]
    \node (a) at (0,0) {} ;
    \node (b) at (1,0) {};
    \node (c) at (.5, .866) {};
    \draw[, white] (0,0) circle (0.05) ;
    \draw[, white] (1,0) circle (.05);
    \draw[, white] (.5, .866) circle (.05);
    \draw[white] (a) -- (b);
    \draw[white] (c) -- (b);
    \draw[white] (a) -- (c);
    \end{tikzpicture}}}\right)$};
\node[fill, white] at (5,-3){${\rm H}_*\left(\text{\raisebox{-1em}{\begin{tikzpicture}[scale =1, very thick]
    \node (a) at (0,0) {} ;
    \node (b) at (1,0) {};
    \node (c) at (.5, .866) {};
    \draw[, white] (0,0) circle (0.05) ;
    \draw[, white] (1,0) circle (.05);
    \draw[, white] (.5, .866) circle (.05);
    \draw[white] (a) -- (b);
    \draw[white] (c) -- (b);
    \draw[white] (a) -- (c);
    \end{tikzpicture}}}\right)$};

\node[fill, white] at (10,3) {${\rm H}_*\left(\text{\raisebox{-1em}{\begin{tikzpicture}[scale =1, very thick]
    \node (a) at (0,0) {} ;
    \node (b) at (1,0) {};
    \node (c) at (.5, .866) {};
    \draw[, white] (0,0) circle (0.05) ;
    \draw[, white] (1,0) circle (.05);
    \draw[, white] (.5, .866) circle (.05);
    \draw[white] (a) -- (b);
    \draw[white] (c) -- (b);
    \draw[white] (a) -- (c);
    \end{tikzpicture}}}\right)$};
\node[fill, white] at (10,0) {${\rm H}_*\left(\text{\raisebox{-1em}{\begin{tikzpicture}[scale =1, very thick]
    \node (a) at (0,0) {} ;
    \node (b) at (1,0) {};
    \node (c) at (.5, .866) {};
    \draw[, white] (0,0) circle (0.05) ;
    \draw[, white] (1,0) circle (.05);
    \draw[, white] (.5, .866) circle (.05);
    \draw[white] (a) -- (b);
    \draw[white] (c) -- (b);
    \draw[white] (a) -- (c);
    \end{tikzpicture}}}\right)$};
\node[fill, white] at (10,-3) {${\rm H}_*\left(\text{\raisebox{-1em}{\begin{tikzpicture}[scale =1, very thick]
    \node (a) at (0,0) {} ;
    \node (b) at (1,0) {};
    \node (c) at (.5, .866) {};
    \draw[, white] (0,0) circle (0.05) ;
    \draw[, white] (1,0) circle (.05);
    \draw[, white] (.5, .866) circle (.05);
    \draw[white] (a) -- (b);
    \draw[white] (c) -- (b);
    \draw[white] (a) -- (c);
    \end{tikzpicture}}}\right)$};

\node[fill, white] at (15,0) {${\rm H}_*\left(\text{\raisebox{-1em}{\begin{tikzpicture}[scale =1, very thick]
    \node (a) at (0,0) {} ;
    \node (b) at (1,0) {};
    \node (c) at (.5, .866) {};
    \draw[, white] (0,0) circle (0.05) ;
    \draw[, white] (1,0) circle (.05);
    \draw[, white] (.5, .866) circle (.05);
    \draw[white] (a) -- (b);
    \draw[white] (c) -- (b);
    \draw[white] (a) -- (c);
    \end{tikzpicture}}}\right)$};

\node (a0) at (0,0) {{${\rm H}_*\left(\text{\raisebox{-1em}{\begin{tikzpicture}[scale =1, very thick]
    \node (a) at (0,0) {} ;
    \node (b) at (1,0) {};
    \node (c) at (.5, .866) {};
    \draw[fill, gray, opacity = .2] (0,0) circle (0.05) ;
    \draw[fill, gray, opacity = .2] (1,0) circle (.05);
    \draw[fill, gray, opacity = .2] (.5, .866) circle (.05);
    \draw[gray, opacity = .2] (a) -- (b);
    \draw[gray, opacity = .2] (c) -- (b);
    \draw[gray, opacity = .2] (a) -- (c);
    \end{tikzpicture}}}\right)$}};

\node (b1) at (5,3) {{${\rm H}_*\left(\text{\raisebox{-1em}{\begin{tikzpicture}[scale =1, very thick]
    \node (a) at (0,0) {} ;
    \node (b) at (1,0) {};
    \node (c) at (.5, .866) {};
    \draw[fill, black] (0,0) circle (0.05) ;
    \draw[fill, gray, opacity = .2] (1,0) circle (.05);
    \draw[fill, gray, opacity = .2] (.5, .866) circle (.05);
    \draw[gray, opacity = .2] (a) -- (b);
    \draw[gray, opacity = .2] (c) -- (b);
    \draw[gray, opacity = .2] (a) -- (c);
    \end{tikzpicture}}}\right)$}};
\node (b2) at (5,0) {{${\rm H}_*\left(\text{\raisebox{-1em}{\begin{tikzpicture}[scale =1, very thick]
    \node (a) at (0,0) {} ;
    \node (b) at (1,0) {};
    \node (c) at (.5, .866) {};
    \draw[fill, gray, opacity = .2] (0,0) circle (0.05) ;
    \draw[fill, black] (1,0) circle (.05);
    \draw[fill, gray, opacity = .2] (.5, .866) circle (.05);
    \draw[gray, opacity = .2] (a) -- (b);
    \draw[gray, opacity = .2] (c) -- (b);
    \draw[gray, opacity = .2] (a) -- (c);
    \end{tikzpicture}}}\right)$}};
\node (b3) at (5,-3){{${\rm H}_*\left(\text{\raisebox{-1em}{\begin{tikzpicture}[scale =1, very thick]
    \node (a) at (0,0) {} ;
    \node (b) at (1,0) {};
    \node (c) at (.5, .866) {};
    \draw[fill, gray, opacity = .2] (0,0) circle (0.05) ;
    \draw[fill, gray, opacity = .2] (1,0) circle (.05);
    \draw[fill, black] (.5, .866) circle (.05);
    \draw[gray, opacity = .2] (a) -- (b);
    \draw[gray, opacity = .2] (c) -- (b);
    \draw[gray, opacity = .2] (a) -- (c);
    \end{tikzpicture}}}\right)$}};

\node (c1) at (10,3) {{${\rm H}_*\left(\text{\raisebox{-1em}{\begin{tikzpicture}[scale =1, very thick]
    \node (a) at (0,0) {} ;
    \node (b) at (1,0) {};
    \node (c) at (.5, .866) {};
    \draw[fill, black] (0,0) circle (0.05) ;
    \draw[fill, black] (1,0) circle (.05);
    \draw[fill, gray, opacity = .2] (.5, .866) circle (.05);
    \draw[black] (a) -- (b);
    \draw[gray, opacity = .2] (c) -- (b);
    \draw[gray, opacity = .2] (a) -- (c);
    \end{tikzpicture}}}\right)$}};
\node (c2) at (10,0) {{${\rm H}_*\left(\text{\raisebox{-1em}{\begin{tikzpicture}[scale =1, very thick]
    \node (a) at (0,0) {} ;
    \node (b) at (1,0) {};
    \node (c) at (.5, .866) {};
    \draw[fill, black] (0,0) circle (0.05) ;
    \draw[fill, gray, opacity = .2] (1,0) circle (.05);
    \draw[fill, black] (.5, .866) circle (.05);
    \draw[gray, opacity = .2] (a) -- (b);
    \draw[gray, opacity = .2] (c) -- (b);
    \draw[black] (a) -- (c);
    \end{tikzpicture}}}\right)$}};
\node (c3) at (10,-3){{${\rm H}_*\left(\text{\raisebox{-1em}{\begin{tikzpicture}[scale =1, very thick]
    \node (a) at (0,0) {} ;
    \node (b) at (1,0) {};
    \node (c) at (.5, .866) {};
    \draw[fill, gray, opacity = .2] (0,0) circle (0.05) ;
    \draw[fill, black] (1,0) circle (.05);
    \draw[fill, black] (.5, .866) circle (.05);
    \draw[gray, opacity = .2] (a) -- (b);
    \draw[black] (c) -- (b);
    \draw[gray, opacity = .2] (a) -- (c);
    \end{tikzpicture}}}\right)$}};

\node (d) at (15,0) {{${\rm H}_*\left(\text{\raisebox{-1em}{\begin{tikzpicture}[scale =1, very thick]
    \node (a) at (0,0) {} ;
    \node (b) at (1,0) {};
    \node (c) at (.5, .866) {};
    \draw[fill, black] (0,0) circle (0.05) ;
    \draw[fill, black] (1,0) circle (.05);
    \draw[fill, black] (.5, .866) circle (.05);
    \draw[black] (a) -- (b);
    \draw[black] (c) -- (b);
    \draw[black] (a) -- (c);
    \end{tikzpicture}}}\right)$}};

\draw[thick, ->] (a0) -- (b1);
\draw[thick, ->] (a0) -- (b2); 
\draw[thick, ->] (a0) -- (b3);

\draw[thick, ->] (b1) -- (c1); 
\draw[thick, ->] (b1) -- (c2);

\draw[thick, ->] (b3) -- (c2);
\draw[thick, ->] (b3) -- (c3);
\draw[line width = 5, white] (b2) -- (c1) ; 
\draw[line width = 5, white] (b2) -- (c3) ; 
\draw[thick, ->] (b2) -- (c1);
\draw[thick, ->] (b2) -- (c3);

\draw[thick, <-] (d) -- (c1) ;
\draw[thick, <-] (d) -- (c2) ; 
\draw[thick, <-] (d) -- (c3) ;
\end{tikzpicture}

\caption{The $0$-degree \"uberchain complex of $\partial\Delta^2$. Here $\Z^{d}_{(i)}$ denotes a $\Z^d$ summand in $\BH^{*}_{i}$. }\label{fig:triang}
\end{figure}
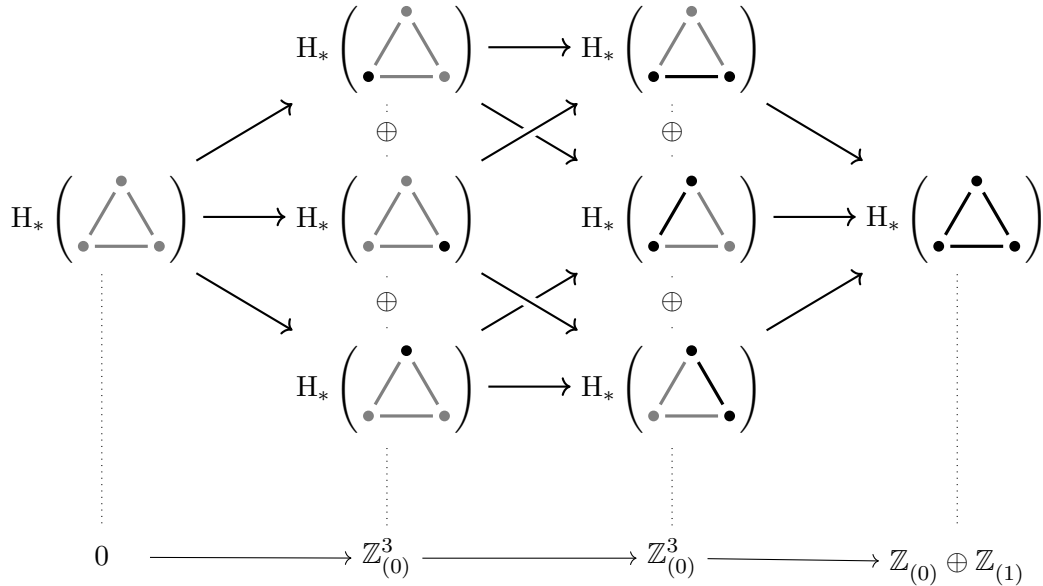
\end{example}

More generally, one can compute the \"uberhomology of the boundary of the simplex in any dimension,~\emph{cf.}~\cite[Example~4.7]{MV}, we recall the result here.

\begin{example}\label{ex:ubersimplspheres}
Let $K$ be the standard simplicial sphere $\partial\Delta^p$ of dimension $p-1$. Then, the only non-trivial $0$-degree \"uberhomology groups of $K$ are $\BH^{p+1}_{p-1}(K) =\Z$ and $\BH^1_0(X) = \Z$.
\end{example}

We refer to \cite{uberhomology,domination,MV} for further computations of \"uberhomology groups. We conclude this section by observing that in~\cite{domination}, the authors proved the existence of a relation between certain \"uberhomology groups and the connected domination polynomial~{(\emph{cf.}~Corollary~\ref{cor:condom}).}

\section{Double homology}

In this section we recall the definition of double homology, following~\cite{LIMONCHENKO2023109274}.

Let $K$ be a simplicial complex. 
For each $I\subseteq V=V(K)$,  consider the reduced simplicial homology $\widetilde{\hh}_*(K[I];\Z)$ with coefficients in  $\Z$ (when clear from the context, coefficients will be omitted). We adopt the convention that the group $\widetilde{\hh}_p(K[\emptyset])=\Z$ is non-trivial only for $p = -1$.
For each $j\in V(K)\setminus I$,  the inclusion map~$K[I]\to K[I\cup\{j\}]$ induces a morphism $\phi_{I,j}$ between the associated homology groups.
We denote by $\phi_{p,I,j}$ the morphism induced on the homology groups in degree $p$. For each~$p\geq -1$ there exists a differential  (\textit{cf}.~\cite[Lemma 3.1]{LIMONCHENKO2023109274})
\begin{equation}\label{eq:diffdelta'}
\partial'_p\coloneqq (-1)^{p+1} \sum_{j\in V\setminus I} (-1)^{|\{i\in I \; : \; i<j\}|} \phi_{p,I,j} \ ,
\end{equation}
defined on the sum $\bigoplus_{I\subseteq V}\widetilde{\hh}_p(K[I];\Z)$. 

\begin{rem}
This construction associates to a subset of the vertices of $K$ the homology of the induced simplicial complex. This can be equivalently described as a functor on the Boolean poset~$B(m)$ on $m\coloneqq |V|$ vertices, where the poset $B(m)$ is regarded as being a category. Each object $i\in B(m)$  corresponds to a set $I_i\subseteq V$. Then, it is straightforward that $F\colon B(m)\to \mathbf{Ab}$, defined by setting $F(i)\coloneqq \widetilde{\hh}_p(K[I_i];\Z)$, yields a functor, and thus we can consider the homology of $B(m)$ with coefficients in $F$. The differential $\partial'$  from Equation~\eqref{eq:diffdelta'} is well-defined after the choice a sign assignment (in fact, any) on the cube $B(m)$ (\textit{cf}.~\cite{primo}).   
\end{rem}

To a finite simplicial complex $K$, one can also associate the well-known \textit{moment-angle complex}~$\cZ_K$~\cite[Chapter~4]{MR3363157}. The (co)homology of $\cZ_K$ has been extensively investigated (see, for instance,~\cite[Theorem~2.1]{LIMONCHENKO2023109274} and references therein).

\begin{prop}
Let $K$ be a simplicial complex with vertices $v_1, \dots, v_m$ and let $I_K$ be the ideal of $\Z[v_1,\dots,v_m]$ generated by monomials $\prod_{i\in I}v_i$ for which $I\subseteq \{1,\dots, m\}$ is not a simplex of~$K$. Then, there are isomorphisms of bigraded algebras
    \[
    \hh^*(\cZ_K)\cong \Tor_{\Z[v_1,\dots,v_m]}(\Z[K],\Z)\cong \bigoplus_{I\subseteq \{1,\dots, m\}}\widetilde{\hh}^*(K[I];\Z) \ ,
    \]
    where $\Z[K]\coloneqq \Z[v_1,\dots,v_m]/I_K$.
\end{prop}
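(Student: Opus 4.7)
The plan is to establish the two isomorphisms independently, following the classical Buchstaber--Panov approach.

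For the first isomorphism $\hh^*(\cZ_K)\cong \Tor_{\Z[v_1,\dots,v_m]}(\Z[K],\Z)$, I would invoke the Eilenberg--Moore spectral sequence applied to the Borel fibration of the moment-angle complex. The coordinatewise action of $T^m$ on $\cZ_K$ is free outside a stratum contractible in the Borel construction, and a standard argument (see~\cite[Ch.~4]{MR3363157}) identifies the Borel construction $ET^m\times_{T^m}\cZ_K$ with the Davis--Januszkiewicz space $DJ(K)=(\mathbb{C}P^\infty)^K$. This yields a fibration $\cZ_K\hookrightarrow DJ(K)\to BT^m$ whose base has cohomology $\Z[v_1,\dots,v_m]$ and whose total space has cohomology equal to the Stanley--Reisner ring $\Z[K]$. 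The Eilenberg--Moore spectral sequence then converges to $\Tor_{\hh^*(BT^m)}(\hh^*(DJ(K)),\Z)$, and a collapse argument (using that $\hh^*(DJ(K))$ is a quotient of a polynomial ring, hence formal as a module) produces the desired identification.

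For the second isomorphism (Hochster's formula), the plan is to compute the Tor groups via the Koszul resolution
\[
\Lambda[u_1,\dots,u_m]\otimes_\Z \Z[v_1,\dots,v_m]\xrightarrow{\ \simeq\ } \Z,
\]
where $\deg u_i=1$, $\deg v_i=2$, and the differential sends $u_i\mapsto v_i$. Tensoring with $\Z[K]$ over $\Z[v_1,\dots,v_m]$ gives a cochain complex $\Lambda[u_1,\dots,u_m]\otimes \Z[K]$ carrying a natural refinement of the cohomological grading by the multidegree $\Z^m$. The key step is to identify, for each subset $J\subseteq\{1,\dots,m\}$, the $J$-multigraded subcomplex with a suitable shift of the augmented simplicial cochain complex $\widetilde{C}^{*}(K[J];\Z)$; the monomials $u_I\cdot v_L\in\Lambda\otimes\Z[K]$ of multidegree $J$ correspond precisely to faces of $K[J]$, and the Koszul differential restricts to the simplicial coboundary. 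Taking cohomology multidegree-wise then yields $\bigoplus_J\widetilde{\hh}^*(K[J];\Z)$, in accordance with the convention $\widetilde{\hh}^{-1}(K[\emptyset])=\Z$ for $J=\emptyset$.

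The main obstacle is the bookkeeping in the second step: matching the Koszul differential on each multigraded piece with the simplicial coboundary of $K[J]$ requires careful tracking of signs and of the cohomological degree shift (which involves $|J|$ and the dimension of the face). Once this identification is in place, the decomposition of $\Tor$ as a direct sum indexed by subsets of $\{1,\dots,m\}$, together with compatibility with the bigraded algebra structure, is a direct consequence.
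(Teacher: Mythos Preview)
The paper does not actually prove this proposition; it is stated as background with a citation to \cite[Theorem~2.1]{LIMONCHENKO2023109274} and the references therein (ultimately \cite{MR3363157}). Your proposal is precisely the classical Buchstaber--Panov argument that those references contain, so it is correct and matches what the paper implicitly invokes.
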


We are interested in a certain homology theory associated to $\cZ_K$, called \emph{double homology}. Before recalling its definition, observe that there is a bigraded decomposition of the homology of $\cZ_K$, given as follows:
\begin{equation}
\label{eq:splitting}
    \hh_p(\cZ_K)\cong \bigoplus_{2l-k=p}\left(\bigoplus_{|I|=l} \widetilde{\hh}_{l-k-1} (K[I])\right) \ .
\end{equation}
The summand between parenthesis in Equation~\eqref{eq:splitting} is denoted by $ \hh_{-k,2l}(\cZ_K)$, see \cite{LIMONCHENKO2023109274}. 
With respect to this bigrading, the differential $\partial'$  of Equation~\eqref{eq:diffdelta'} splits as:
\[
\partial'_{-k,2l}\colon \hh_{-k,2l}(\cZ_K) \longrightarrow \hh_{-k-1,2l+2}(\cZ_K) \ .
\]
\begin{defn}
The \emph{double homology} of the moment-angle complex is defined as the homology
\[\dH_{*,*}(\cZ_{K}) \coloneqq \hh(\hh_{*,*}(\cZ_K),\partial')\]     
of the complex $(\hh_{*,*}(\cZ_K),\partial')$.
\end{defn}

In~\cite{LIMONCHENKO2023109274}, double homology groups are  denoted as $\hh\hh_*$; here we use $\DH_*$ instead to avoid ambiguity with Hochschild homology. 

\begin{example}[cf.~{\cite[Proposition~6.2]{LIMONCHENKO2023109274}}]\label{ex:doublehomsimplsphere}
Let $K$ be the boundary of the $(m-1)$-simplex~$\Delta^{m-1}$. Then,
    \[
    \dH_{-k,2l}(\cZ_K;\Z)=\begin{cases}
        \Z & \text{for } (-k,2l)=(0,0), (-1,2m)\\
        0 & \text{otherwise}
    \end{cases}
    \]
As a concrete example, for $K=\partial \Delta^1$ with vertices $p$ and $q$, we can  represent  the double homology chain complexes with the diagrams:
\begin{center}
\begin{tikzcd}
  & \widetilde{\hh}_0(K[p])\arrow[dr, "\phi_{\{p\},q}"]\cong 0 & \\
\widetilde{\hh}_0(\emptyset)\cong 0\arrow[ur,"\phi_{\emptyset, p}"] \arrow[dr, "\phi_{\emptyset, q}"'] & & \widetilde{\hh}_0(\partial\Delta^1)\cong \Z\\
& \widetilde{\hh}_0(K[q])\cong 0 \arrow[ur, "\phi_{\{q\}, p}"'] & 
 \end{tikzcd}    
 \end{center}
 and
\begin{center}
 \begin{tikzcd}
  & \widetilde{\hh}_{-1}(K[p])\arrow[dr, "\phi_{\{p\},q}"]\cong 0 & \\
\widetilde{\hh}_{-1}(\emptyset)\cong \Z\arrow[ur,"\phi_{\emptyset, p}"] \arrow[dr, "\phi_{\emptyset, q}"'] & & \widetilde{\hh}_{-1}(\partial\Delta^1)\cong 0\\
& \widetilde{\hh}_0(K[q])\cong 0 \arrow[ur, "\phi_{\{q\}, p}"'] & 
 \end{tikzcd}    
 \end{center}
\end{example}

In complete analogy with double homology, one can construct double cohomology as well. These two agree when the (co)homologies of the induced subcomplexes $K[I]$ are free. 

\begin{example}[{\cite[Theorem~7.2]{LIMONCHENKO2023109274}}]\label{ex:doublecycle}
Let $C_m$ be the cycle on $m\geq 5$ vertices. Then, the double cohomology $\DH^{-k,2l}(\cZ_{C_m};\Z)$ of $\cZ_{C_m}$ is $\Z$ in bidegrees $(-k,2l)=(0,0), (-1,4), (-m+3,2(m-2)), (-m+2,2m)$, and is trivial otherwise.
\end{example}

Double (co)homology groups exhibit several interesting properties. For instance, they detect simplices.

\begin{prop}[{\cite[Proposition~6.1]{LIMONCHENKO2023109274}}]\label{prop:2.6}
The moment-angle complex $\cZ_K$ is contractible if and only if $K$ is a simplex if and only if $\DH^*(\cZ_K)=\Z$.
\end{prop}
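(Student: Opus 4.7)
The plan is to establish the cycle of implications
$K \text{ simplex} \Rightarrow \cZ_K \text{ contractible} \Rightarrow \DH^*(\cZ_K)=\Z \Rightarrow K \text{ simplex}$. The first implication is classical: if $K = \Delta^{V(K)}$, then by definition $\cZ_K \cong (D^2)^{|V(K)|}$, which is contractible. For the second implication, if $\cZ_K$ is contractible then the splitting of Equation~\eqref{eq:splitting} forces $\widetilde{\hh}^*(K[I]) = 0$ for every non-empty $I \subseteq V(K)$, and the cochain complex computing $\DH^*$ collapses to the single copy of $\Z$ in bidegree $(0,0)$ coming from $\widetilde{\hh}^{-1}(\emptyset)$; thus $\DH^*(\cZ_K) = \Z$ follows automatically, with no nontrivial differential to take into account.

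The substantive step is the final implication, which I will prove by contrapositive: assuming $K$ is not a simplex, I will produce a non-trivial class in $\DH^*(\cZ_K)$ outside bidegree $(0,0)$. Let $n$ denote the minimum cardinality of a non-face of $K$; since $K$ is a connected simplicial complex on $V(K)$, every singleton is a face, so $n\geq 2$. All subcomplexes $K[I]$ with $|I|<n$ are then simplices and contribute trivially to the complex. For each minimal non-face $\sigma$ of cardinality $n$, one has $K[\sigma] = \partial\Delta^\sigma$, and $\widetilde{\hh}^{n-2}(K[\sigma]) \cong \Z$ is generated by the fundamental cohomology class $[\partial\sigma]$. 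This class is automatically a cocycle in the complex computing $\DH^*$, since the possible targets of the coboundary at level $|I|=n-1$ all vanish in cohomological degree $n-2$.

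The hard part will be to show that $[\partial\sigma]$, or more generally some non-trivial linear combination of fundamental classes over minimal non-faces, is not a coboundary of classes at level $|I|=n+1$. My plan is to analyze the restriction maps $\widetilde{\hh}^{n-2}(K[\sigma\cup\{j\}]) \to \widetilde{\hh}^{n-2}(K[\sigma])$ via the Mayer--Vietoris sequence for the decomposition of $K[\sigma\cup\{j\}]$ as the union of $K[\sigma]$ and the star of $j$ in $K[\sigma\cup\{j\}]$, thereby reducing the question to the local topology of the link of $j$. Alternatively, I would proceed by induction on $|V(K)|$: the base case $|V(K)|=n$ specializes to $K=\partial\Delta^{n-1}$, for which $\DH^*$ has rank $2$ by Example~\ref{ex:doublehomsimplsphere}, directly contradicting the hypothesis; the inductive step would use the cubical structure to isolate a sub-cube on which the argument applies.
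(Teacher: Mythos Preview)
The paper does not actually prove this proposition: it is quoted verbatim from \cite[Proposition~6.1]{LIMONCHENKO2023109274} and used as a black box. So there is no ``paper's own proof'' to compare against; I can only assess your argument on its own merits.

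Your first two implications are correct and standard. The setup for the third is also fine: choosing $n$ to be the minimal size of a non-face, observing that $K[I]$ is a full simplex for $|I|<n$ and hence contributes nothing in cohomological degree $n-2$, and concluding that the fundamental class $[\partial\sigma]\in\widetilde{\hh}^{n-2}(K[\sigma])$ is automatically a cocycle---all of this is right.

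The genuine gap is exactly where you locate it, and neither of your two plans closes it. For the Mayer--Vietoris approach, you would need to show that for \emph{every} $j\notin\sigma$ the restriction $\widetilde{\hh}^{n-2}(K[\sigma\cup\{j\}])\to\widetilde{\hh}^{n-2}(K[\sigma])$ misses $[\partial\sigma]$ (or at least that no linear combination over all $j$ hits the sum of such classes). Mayer--Vietoris reduces this to the link of $j$ inside $K[\sigma\cup\{j\}]$, but that link can be an arbitrary subcomplex of $\partial\Delta^{\sigma}$, and you give no mechanism to control it; in particular there is no reason the restriction should vanish for a single $j$. You would have to work with the full cokernel, and you have not indicated how. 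For the inductive approach, the base case is fine, but ``isolate a sub-cube'' is not an argument: deleting a vertex $v$ gives a mapping-cone decomposition of the cube, and you would need to show that a nontrivial class in $\DH^*(\cZ_{K[V\setminus\{v\}]})$ survives to $\DH^*(\cZ_K)$, which is exactly the kind of statement you are trying to prove.

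As it stands, the proposal is an outline with the decisive step missing. If you want an argument intrinsic to this paper's framework, note that Theorem~\ref{thm:double and MVss} identifies $\DH$ with $\overline{E}^2$ of a spectral sequence that converges to $0$ whenever $K$ is not a simplex; since $\overline{E}^2_{m-1,-1}\cong\Z$ always, that class must eventually be killed by some $d_r$, forcing a second nonzero term in $\overline{E}^2$ and hence in $\DH^*(\cZ_K)$.
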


Furthermore, the double homology of flag complexes of chordal graphs is explicitly known.

\begin{prop}[{\cite[Theorem~6.8]{LIMONCHENKO2023109274}}]
If $K$ is the flag complex of a chordal graph, then the double cohomology $\DH(\cZ_K)$ has rank 2.   
\end{prop}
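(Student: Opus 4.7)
I would argue by induction on the number of vertices $m$ of the chordal graph $G$ whose flag complex is $K$, using the classical structure theorem that every chordal graph admits a \emph{simplicial vertex} $v$ (a vertex whose open neighbourhood $N(v)\subseteq V(G)$ forms a clique), and that $G' := G \setminus \{v\}$ is again chordal. The base case, when $G$ is complete so $K$ is a simplex, follows from Proposition~\ref{prop:2.6}, together with a direct check on the smallest non-trivial chordal graphs (a disjoint pair of vertices, a path of three vertices) to confirm the count of two generators.

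For the inductive step, the key tool is the bigraded decomposition
\[
\hh_{-k,2l}(\cZ_K) \;=\; \bigoplus_{|I|=l}\widetilde{\hh}_{l-k-1}(K[I])\ ,
\]
which one splits into the contributions from subsets $I$ with $v\notin I$ and those with $v\in I$. In the first case $K[I] = K'[I]$ where $K' = \mathrm{Fl}(G')$, so these summands contribute $\hh_{*,*}(\cZ_{K'})$ verbatim. For $I\ni v$, writing $I'=I\setminus\{v\}$: since $N(v)$ is a clique, both the star $\mathrm{st}_{K[I]}(v) = v * \Delta(N(v)\cap I')$ and its intersection $\Delta(N(v)\cap I')$ with the anti-star $K'[I']$ are simplices, hence contractible. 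The Mayer--Vietoris sequence then yields $\widetilde{\hh}_*(K[I])\cong \widetilde{\hh}_*(K'[I'])$ whenever $N(v)\cap I'\neq \emptyset$, with a single extra $\Z$-summand appearing in $\widetilde{\hh}_0(K[I])$ in the exceptional case where $v$ is isolated in $K[I]$.

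Encoding these identifications at the chain level equips $(\hh_{*,*}(\cZ_K),\partial')$ with a filtration by ``subsets containing $v$'', whose associated spectral sequence has $E_1$ page consisting of two appropriately shifted copies of $(\hh_{*,*}(\cZ_{K'}),\partial')$, joined by a $d_1$ differential induced by the inclusion-maps $\phi_{I',v}$. Checking that this $d_1$ is an isomorphism away from the bidegrees supporting the two expected surviving generators reduces the rank computation back to $\DH(\cZ_{K'})$, which has rank $2$ by the inductive hypothesis. The main obstacle, and where I expect the bulk of the technical work to sit, is controlling the ``$v$ isolated in $K[I]$'' exceptional case: the extra $\Z$-summands it introduces must be shown to match and cancel against classes coming from the inclusion differentials $\phi_{I',v}$. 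This bookkeeping is most transparent when the construction is reorganised as the total complex of a double complex with the two directions given respectively by the poset filtration on $V(G')$ and the binary filtration ``$v\in I$ / $v\notin I$'', so that the cancellation becomes the acyclicity of a concrete mapping cone.
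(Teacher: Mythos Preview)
Your inductive strategy via simplicial vertices is a reasonable line of attack, but it is \emph{not} the route the paper takes. The paper's proof is global and non-inductive: it uses that the flag complex of a chordal graph is contractible and $1$-Leray, which forces the first page of the augmented reduced Mayer--Vietoris spectral sequence to be concentrated in the row $q=0$ together with the single class $\widetilde{\hh}_{-1}(\emptyset)=\Z$ at $(m-1,-1)$. An Euler-characteristic comparison (Proposition~\ref{prop:comparechars}) shows the $0$-row on page two carries at least one class; convergence to zero and the fact that only $d_2$ can reach $(m-1,-1)$ shows there is at most one, sitting at $(m-3,0)$. That is the whole argument --- no induction, no vertex-by-vertex bookkeeping.

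Your proposal, by contrast, has a genuine gap in the place where the rank is meant to be pinned down. First a minor point: for the two-step filtration by ``$v\in I$'' the page you call $E_1$ is really $E_0$ (the two graded pieces are chain complexes, and the map $\phi_{I',v}$ you want is the connecting map on the \emph{next} page). More substantively, your claim that this map ``is an isomorphism away from the bidegrees supporting the two expected surviving generators'' is not correct. After the Mayer--Vietoris identification, the chain map $\Phi_{I'}\colon \widetilde{\hh}_*(K'[I'])\to\widetilde{\hh}_*(K[I'\cup\{v\}])$ fails to be an isomorphism for \emph{every} non-empty $I'\subseteq V(G')\setminus N(v)$ (each contributes a $\Z$-cokernel in degree $0$), not just at two bidegrees; and where $\Phi$ \emph{is} an isomorphism it is literally the identity, so those pieces contribute nothing to the cone. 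Thus the two surviving classes do not come from the inductive hypothesis on $\DH(\cZ_{K'})$ in the way you describe --- they come precisely from the exceptional terms you defer, and it is the analysis of that entire subcomplex of ``$v$ isolated'' contributions (together with the $\widetilde{\hh}_{-1}(\emptyset)$ term) that carries the argument. You should also note that the inductive hypothesis itself fails when $G\setminus\{v\}$ happens to be complete, since then $K'$ is a simplex and $\DH(\cZ_{K'})$ has rank~$1$.
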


\section{Comparison and applications}

In this section we will show that the 0-degree of \"uberhomology, and double homology are closely related. To prove this, we are going to use the augmented Mayer-Vietoris spectral sequence with respect to a specific cover, the \emph{anti-star}. We begin by introducing the necessary notation and then provide the comparison result in Theorem~\ref{thm:comparison}.

In the following and in the next section, unless otherwise specified, we let $K$ be a simplicial complex which is not the standard simplex. We shall also assume that $K$ has no ghost vertices, that is $K[I]=\emptyset$ if and only if $I=\emptyset$. 
For each vertex $v \in V(K)$, denote by $\mathrm{ast}_K(v)$ the subcomplex of $K$ spanned by the vertices in $V(K)\setminus\{v\}$; that is the complement of the (open) star of $v$.  Denote by $\Uast(K)=\{\mathrm{ast}_K(v)\}_{v\in V(K)}$  the anti-star cover of~$K$, and by $\Nerve(\Uast)$ its nerve. 

To a $p$-simplex $\sigma \in \Nerve(\Uast)$ spanned by the vertices  $v_{j_0},\dots,v_{j_p}$, we associate the subcomplex~$U_\sigma\coloneqq \mathrm{ast}_K(v_{j_0})\cap\dots\cap \mathrm{ast}_K(v_{j_p}) \subseteq K$.

\begin{rem}
Note that, for $v\in V(K)$ a vertex, we have $\mathrm{ast}_K(v)=K[V(K)\setminus\{v\}]$. Analogously, if $\sigma$ is spanned by the vertices $v_{j_0},\dots,v_{j_p}$, we have $U_\sigma=K[V(K)\setminus \{v_{j_0},\dots,v_{j_p}\}]$. 
\end{rem}

For each $q \in \N$  and $\sigma\in \Nerve(\Uast) $, define $$\cH_q(\sigma)\coloneqq \widetilde{\hh}_q(U_\sigma)$$ as the reduced $q$-\textit{th} homology group of the subcomplex $U_\sigma$ of $K$. If $\tau$ is a face of $\sigma$, then the inclusion $U_{\sigma}\subseteq U_{\tau}$ induces a map between the associated $q$-homology groups. 
Denote by $\delta_h$ the differential induced by the simplicial differential of $\Nerve(\Uast)$. Then, if $\Nerve_p(\Uast)$ denotes the set of $p$ simplices of $\Nerve(\Uast)$, we can consider the double chain complex
\begin{equation}\label{eq:zeropage}
C_{p,q}^0\coloneqq \bigoplus_{\sigma \in \Nerve_p(\Uast)}C_q(U_\sigma)
\end{equation}
of simplicial chains $U_\sigma$ endowed with the horizontal differential $\delta_h$ and the simplicial differential $\delta_v\colon C^0_{p,q}\to C^0_{p,q-1}$ as vertical differential. Taking the filtration of this double  chain complex  (with respect to the vertical differential first) yields the so-called Mayer-Vietoris spectral sequence of $K$ with respect to the cover $\Nerve(\Uast)$. 

Recall now that the  first page of the Mayer-Vietoris spectral sequence associated to a simplicial complex $K$ and cover $\cU$ 
is 
\begin{equation}\label{eq:E1}
E^1_{p,q}=\bigoplus_{\sigma\in \mathrm{N}_p(\cU)} \hh_q(U_{\sigma}),
\end{equation}
with differential $\dd^{(1)}\colon E^1_{p,q}\to E^1_{p-1,q}$  induced by $\dd_h$. This spectral sequence is called \emph{augmented} if the empty intersection $U_{\emptyset} = K$ is allowed -- that is, in the augmented Mayer-Vietoris spectral sequence we have ${E}^1_{-1,q}={\hh}_q(K)$. We also consider a reduced and augmented version of it; that is, we consider the spectral sequence whose first page is
\begin{equation}\label{eq:E1pq}
\overline{E}^1_{p,q}=\bigoplus_{\sigma\in \mathrm{N}_p(\cU)} \widetilde{\hh}_q(U_{\sigma})\ , \quad p\geq 0, q \geq -1\ ,
\end{equation}
with the convention that if $Y = \emptyset$ then $\widetilde{H}_{-1}(Y) = \Z$ and it is trivial otherwise.
That is, we use the reduced homology groups \emph{en lieu} of the unreduced homology when taking the vertical differential of the corresponding (augmented) double chain complex.
Furthermore, we augment it by setting $\overline{E}^1_{*,-1}=\overline{E}^1_{m-1
,-1}=\Z$ and $\overline{E}^1_{-1,q}=\widetilde{\hh}_q(K)$. If $\cU$ is the anti-star cover $\Uast$, the former corresponds to the intersection of all the anti-stars, while the latter corresponds to the empty intersections of the anti-stars. We refer to~\cite{MV} for more details on the construction of the augmented unreduced Mayer-Vietoris spectral sequence. 

\begin{rem}
The `usual' (non-augmented) Mayer-Vietoris  spectral sequence, associated to a finite open covering of $X$, converges to $H_*(X)$.  In fact, being the spectral sequence associated to a double chain complex, it converges to the homology of the
associated total complex, see e.g.~\cite[Section~3.1]{MV}. Since we augmented the sequence with the homology of $X$, the augmented (unreduced) Mayer-Vietoris spectral sequence  converges to $0$ by~\cite[Remark~4.2]{MV}. 
\end{rem}

Similarly, we have the following:

\begin{lem}\label{lem:convergence}
Let $K$ be a simplicial complex which is not a standard simplex. Then, the reduced augmented Mayer-Vietoris spectral sequence, with respect to the anti-star cover, converges to $0$.
\end{lem} 
\begin{proof}
Consider the double chain complex $C^0_{p,q}$ of Equation~\eqref{eq:zeropage}, augmented in degree $p=-1$ with the simplicial chains $C_{-1,q}^0=C_q(K)$ of $K$. Recall that in the reduced version of the augmented Mayer-Vietoris spectral sequence, we also have an additional row for $q=-1$  corresponding to the augmentations of the chain complexes $C_{p,*}^0$. As in \cite[Remark~4.2]{MV}, the rows corresponding to $q\geq 0$ of this double chain complex are exact. For $q=-1$, we have $C^0_{p,-1}=\mathbb{Z}$ for all $-1\leq p\leq m-1$. The horizontal differentials, for $q=-1$ are identity maps, hence the horizontal chain complex $C^0_{*,-1}$ is acyclic as well. The total complex of a double chain complex with acyclic rows is acyclic, hence the associated spectral sequence converges to $0$.
\end{proof}

The first step towards the proof of Theorem~\ref{thm:comparison}, is to compare double homology and the second page of the reduced and augmented Mayer-Vietoris spectral sequence.

\begin{thm}\label{thm:double and MVss}
Let $K\neq \Delta^{m-1}$ be a  simplicial complex with $m$ vertices. Then,
\[ {\rm DH}_{-k,2l}(\cZ_K) \cong \overline{E}^2_{m - l - 1, l-k-1}(K)\ ,\]
for all $k,l\in \N$.
\end{thm}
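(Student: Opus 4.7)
The plan is to match the first page of the reduced augmented Mayer--Vietoris spectral sequence, bidegree by bidegree, with the double chain complex $(\hh_{*,*}(\cZ_K),\partial')$, and then to identify the two as bigraded chain complexes using the flexibility of sign assignments on the Boolean cube. Passing to the second page will then yield the claim, since by definition $\DH_{*,*}(\cZ_K)$ is the homology of $(\hh_{*,*}(\cZ_K),\partial')$.

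First I would identify the modules. A $p$-simplex $\sigma=\{v_{j_0},\ldots,v_{j_p}\}$ of $\Nerve(\Uast)$ satisfies $U_\sigma = \mathrm{ast}_K(v_{j_0})\cap\cdots\cap\mathrm{ast}_K(v_{j_p}) = K[V(K)\setminus\sigma]$; setting $I = V(K)\setminus\sigma$ therefore gives $|I| = m-p-1$. Grouping the direct sum in Equation~\eqref{eq:E1pq} by the cardinality of $I$, and accounting for the augmentation terms $\overline{E}^1_{m-1,-1}=\Z$ and $\overline{E}^1_{-1,q}=\widetilde{\hh}_q(K)$ in the extreme cases $I=\emptyset$ and $I=V(K)$, yields the identification of underlying modules
\[
\overline{E}^1_{m-l-1,\,l-k-1}(K) \;=\; \bigoplus_{|I|=l}\widetilde{\hh}_{l-k-1}(K[I]) \;=\; \hh_{-k,2l}(\cZ_K)
\]
for all $k,l\in\N$, the second equality being Equation~\eqref{eq:splitting}.

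Next I would match the differentials. The MV differential $\delta^{(1)}\colon\overline{E}^1_{p,q}\to\overline{E}^1_{p-1,q}$ is built from the simplicial boundary of $\Nerve(\Uast)$ together with the inclusion-induced maps $\widetilde{\hh}_*(U_\sigma)\to\widetilde{\hh}_*(U_\tau)$ for each codimension-one face $\tau\subset\sigma$. Under the bijection $\sigma\leftrightarrow I$, removing a vertex $v_j$ from $\sigma$ corresponds to adjoining it to $I$, and the inclusion $U_\sigma\subseteq U_\tau$ becomes the inclusion-induced map $K[I]\hookrightarrow K[I\cup\{v_j\}]$, which is precisely the map $\phi_{*,I,j}$ of Equation~\eqref{eq:diffdelta'}. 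Hence $\delta^{(1)}$ and $\partial'$ have the same underlying linear components, and differ only in the sign conventions they assign on the Boolean cube $B(m)$.

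Both sign conventions make $\bigoplus_{I\subseteq V(K)}\widetilde{\hh}_q(K[I])$ into a chain complex and are therefore sign assignments on $B(m)$ in the sense recalled in Section~\ref{sec:uber}. By the principle that any two such sign assignments yield isomorphic chain complexes (\cite[Rmk.~2.10]{domination}, \cite[Ex.~3.15]{primo}), the first page with differential $\delta^{(1)}$ is isomorphic, as a bigraded chain complex, to $(\hh_{*,*}(\cZ_K),\partial')$. Passing to homology gives the desired isomorphism $\overline{E}^2_{m-l-1,\,l-k-1}(K)\cong \DH_{-k,2l}(\cZ_K)$. I expect the step requiring most care to be the handling of the augmentation: verifying that the augmented summands in $\overline{E}^1$ correspond precisely to the $I=\emptyset$ and $I=V(K)$ corners of the double chain complex, and that the extended MV differential yields a valid sign assignment on the entire cube $B(m)$, so that the sign-assignment principle applies uniformly.
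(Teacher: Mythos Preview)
Your proposal is correct and follows essentially the same approach as the paper: identify $\overline{E}^1_{m-l-1,\,l-k-1}$ with $\hh_{-k,2l}(\cZ_K)$ via the bijection $\sigma\leftrightarrow I=V(K)\setminus\sigma$, observe that the first-page differential and $\partial'$ have the same components $\phi_{*,I,j}$ up to a sign assignment on the Boolean cube, and conclude by the standard fact that any two sign assignments give isomorphic complexes. The paper's proof is more terse and cites \cite[Section~3.2]{primo} for the sign issue, whereas you spell out the augmentation and the dictionary more carefully, but there is no substantive difference.
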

\begin{proof}
By definition (see Equation~\eqref{eq:splitting}), we have
    \[
\hh_{-k,2l}(\cZ_K)= \bigoplus_{|I|=l} \widetilde{\hh}_{l-k-1} (K[I]) \ . 
    \]
This coincides with the summand in bidegree $(m-l-1,l-k-1)$ in the first page of the augmented reduced Mayer-Vietoris spectral sequence with respect to the antistar cover. 

Moreover, the differential $\partial'_{-k,2l}$ coincides, up to a sign, with the differential on the first page of the Mayer-Vietoris spectral sequence. This sign discrepancy amounts to the choice of sign assignment on a cube, and does not affect the homology~(\textit{cf.}~\cite[Section~3.2]{primo}). 

Taking the homology with respect to this differential yields the second page of the augmented and reduced Mayer-Vietoris spectral sequence.
\end{proof}

In~\cite{MV}, the authors proved that, up to a small change in gradings, the $0$-degree \"uberhomology can be identified with the second page of the augmented (unreduced) Mayer-Vietoris spectral sequence of $K$ with respect to~$\cU^{\mathrm{ast}}$: 

\begin{thm}[{\cite[Theorem~1.1]{MV}}]\label{thm:uber=MV}
Let $K$ be a finite and connected simplicial complex on $m$ vertices. Then, for all $ i\geq 0$ and $0\leq j\leq m$, there exists an isomorphism of bigraded modules 
\[
 E^2_{m-j-1, i} \cong \BH^j_i (K)
\]
between the second page of the augmented  Mayer-Vietoris spectral sequence of  ${K}$, associated to the anti-star cover, and the $0$-\textit{th} degree \"uberhomology of $K$.  
\end{thm}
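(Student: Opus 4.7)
The plan is to compare, column-by-column, the $E^{1}$-page of the augmented Mayer--Vietoris spectral sequence associated to $\Uast$ with the $0$-weight part of the \"uber chain complex $\UC^{*}(K;\Z)$, and then to check that the induced first differentials agree (up to a choice of sign assignment on the Boolean cube $B(m)$). For the anti-star cover, a $p$-simplex $\sigma \in \mathrm{N}_{p}(\Uast)$ with vertex set $\{v_{j_{0}},\dots,v_{j_{p}}\}$ gives
\[
U_{\sigma} \;=\; \bigcap_{\ell=0}^{p} \mathrm{ast}_{K}(v_{j_{\ell}}) \;=\; K[V\setminus \{v_{j_{0}},\dots,v_{j_{p}}\}].
\]
Writing $J \coloneqq V \setminus V(\sigma)$ and $j \coloneqq m - p - 1 = |J|$, the column $p = m-j-1$ of the $E^{1}$-page becomes
\[
E^{1}_{m-j-1,\,i} \;=\; \bigoplus_{|J|=j}\ \hh_{i}(K[J]).
\]
On the \"uberhomology side, bicolourings $\varepsilon$ with $\ell(\varepsilon) = j$ are in bijection with $j$-element subsets $I_{\varepsilon}\subseteq V$ (the $1$-coloured vertices). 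Since the $0$-weight part of $\Hh(K,\varepsilon)$ is generated by simplices with no $0$-coloured vertices, it coincides with $\hh_{i}(K[I_{\varepsilon}])$; hence the bijection $J \leftrightarrow I_{\varepsilon}$ identifies $E^{1}_{m-j-1,\,i}$ with the $0$-weight part of $\UC^{j}(K;\Z)$ in simplicial degree $i$.

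Next, I would match the differentials. The first-page MV differential $\delta^{(1)}\colon E^{1}_{p,q} \to E^{1}_{p-1,q}$ is the alternating sum of the maps induced by the face inclusions of the nerve, which remove one vertex from $V(\sigma)$. Under the complementation $J = V \setminus V(\sigma)$, this is the alternating sum of the maps $\hh_{i}(K[J]) \to \hh_{i}(K[J\cup\{v\}])$ induced by inclusions of subcomplexes. This is exactly the datum of the component $d_{\varepsilon,\varepsilon'}$ of the \"uberdifferential in Equation~\eqref{eq:uberdiff}, restricted to weight zero. The two natural sign prescriptions (nerve orientation versus the lexicographic count $s(\varepsilon,\varepsilon')$) are both sign assignments on $B(m)$, and any two such assignments yield isomorphic poset-homology complexes, as explained in the remarks following Definition~\ref{def:uber}. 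Taking homology on both sides then produces the claimed isomorphism $E^{2}_{m-j-1,\,i} \cong \BH^{j}_{i}(K)$.

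The main obstacle I anticipate is careful bookkeeping of the augmentation and of the extreme columns. One should verify that the $p = -1$ augmentation term $\hh_{q}(K)$ and the top contribution at $p = m-1$ (which in the unreduced setting corresponds to the empty intersection $U_{V} = \emptyset$) align with the boundary values $j = m$ and $j = 0$ of the \"uber chain complex, where the subcomplexes $K[I_{\varepsilon}]$ degenerate to $K$ itself and to the empty simplicial complex, respectively. Once these edge cases are reconciled (and the signs fixed as above), the two chain complexes agree and the isomorphism of $E^{2}$-pages with $\BH^{*}_{*}(K)$ follows at once.
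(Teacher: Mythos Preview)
The paper does not give its own proof of this theorem: it is quoted verbatim from \cite[Theorem~1.1]{MV} and used as a black box in the proof of Theorem~\ref{thm:comparison}. There is therefore no argument in the present paper to compare your proposal against.

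That said, your outline is correct and is precisely the natural line of argument (and, to the extent one can infer from the surrounding discussion, the one in \cite{MV}). The key identification $U_{\sigma}=K[V\setminus V(\sigma)]$ is exactly what makes the anti-star cover special: the nerve is the boundary $\partial\Delta^{m-1}$, so the $p$-th column of $E^{1}$ is indexed by all $(m{-}p{-}1)$-element subsets $J\subseteq V$, and the face maps of the nerve become the inclusions $K[J]\hookrightarrow K[J\cup\{v\}]$. Your treatment of the augmentation is also right: the added column $E^{1}_{-1,i}=\hh_{i}(K)$ matches the $j=m$ summand $\hh_{i}(K[V])$, and the missing nerve simplex at $p=m-1$ causes no discrepancy since $\hh_{i}(K[\emptyset])=0$ for $i\geq 0$ in the unreduced theory. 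The only genuine verification left implicit in your sketch is that the augmentation differential $E^{1}_{0,i}\to E^{1}_{-1,i}$ agrees (up to sign) with the \"uberdifferential $\UC^{m-1}\to\UC^{m}$; this is again just the map induced by the inclusions $K[J]\hookrightarrow K$ for $|J|=m-1$, so it goes through. The sign issue is handled exactly as you say, via the independence of poset homology from the choice of sign assignment on $B(m)$.
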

\begin{figure}[h]
    \centering
    \begin{tikzpicture}[xscale = 1.35, yscale = 1.05 ]
    
    \draw[very thin, gray, opacity =.5, step =2] (-5,-4.75) grid (5,3.25);
    \draw[very thick, gray, ->] (-5.5,-2) -- (5.5,-2) node[right] {$j$};
    
    \draw[very thick, gray, ->] (-2,-4.75) -- (-2, 4) node[left] {$i$};

    \def\x{1}
    \foreach \y in {-1,0,...,2} 
       {
       \node[fill = white] (a\x\y) at (4*\x-6,2*\y-2) {$\underset{\vert I \vert = \x}{\bigoplus}$ \raisebox{-.4em}{$\widetilde{\hh}_{\y}(K[I])$}};} 
\def\x{2}
 \foreach \y in {-1,0,...,2} 
       {
       \node[fill = white] (a\x\y) at (4*\x-6,2*\y-2) {$\underset{\vert I \vert = m-2}{\bigoplus}$ \raisebox{-.4em}{$\widetilde{\hh}_{\y}(K[I])$}};} 
   
    \foreach \y in {0,...,2} 
       {\node[fill = white]  at (-4,2*\y-2) {$ \widetilde{\hh}_{\y}(K)$};
       \node[fill = white]  at (0,2*\y-2) {$ \cdots$};
       \node[fill = white]  at (4.25,2*\y-2) {$  \widetilde{\hh}_{\y}(\emptyset) = 0$};}

       \node[fill = white]  at (-4.25,-4) {$ \widetilde{\hh}_{-1}(K)$};
       \node[fill = white]  at (0,-4) {$ \cdots$};
       \node[fill = white]  at (4.25,-4) {$  \widetilde{\hh}_{-1}(\emptyset) = \Z$};
   \draw[color = white, pattern color = blue,pattern = north east lines, opacity =.25] 
   (5.25,-4.5) -- (-5.25,-4.5) -- (-5.25,3.5) -- (-3.25, 3.5) -- (-3.25, -3) --  (3.25,-3)-- (3.25, 3.5) -- (5.25,3.5) --  (5.25,-4.5);

      \draw[color = white, pattern color = red,pattern = north west lines, opacity =.25] 
   (-5.5,-.5) rectangle  (5.5,3.75);
    \end{tikzpicture}
    \caption{First page of the augmented reduced Mayer-Vietoris spectral sequence. The differentials of degree $(-1,0)$  are implicit in the picture. The augmentation is shaded in blue, while the portion isomorphic to the usual (augmented) Mayer-Vietoris spectral sequence is shaded in red.}
    \label{fig:degree-plane}
\end{figure}
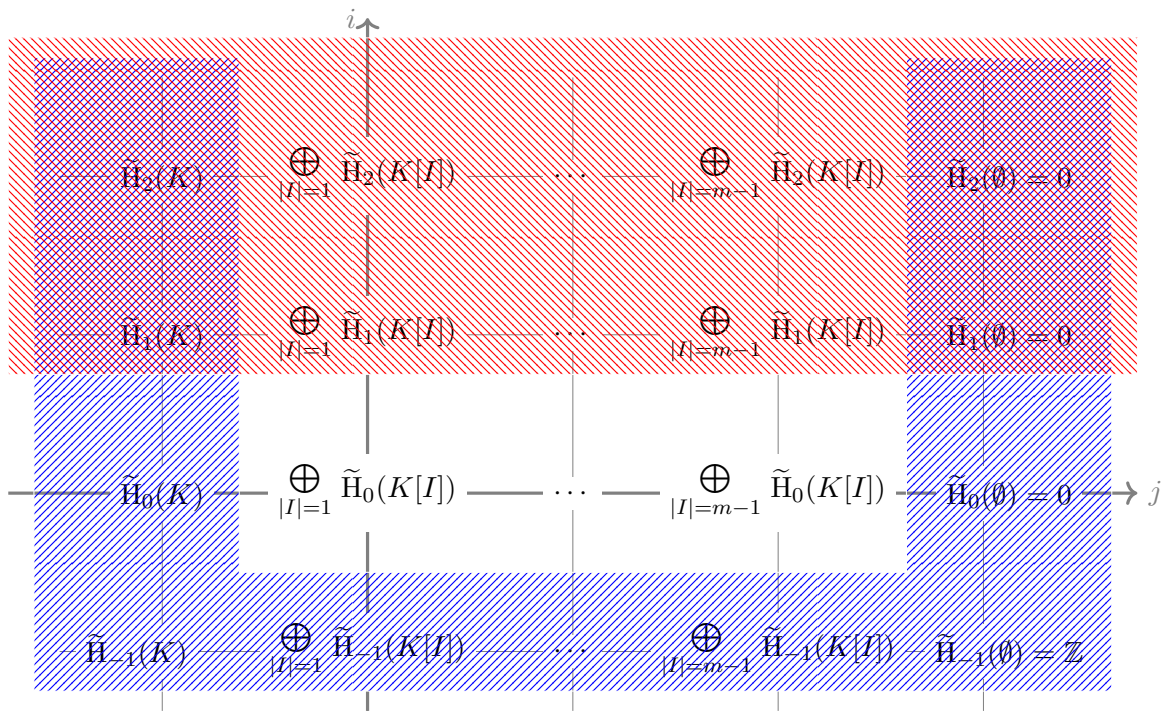

Using both the reduced and unreduced augmented Mayer-Vietoris spectral sequences associated to the anti-star cover of $K$, we obtain the main result of the paper: 
\begin{thm}\label{thm:comparison}
Let $K$ be a finite and connected simplicial complex with $m$ vertices. Then, there is an isomorphism
\[ \BH_{i}^{j}(K) \cong {\rm DH}_{i-j+1,2j}(\cZ_K)\]
for each $i\neq 0,-1$. \\Equivalently,
\[ {\rm DH}_{-k,2l}(\cZ_K) \cong \BH_{l-k-1}^{l}(K)\ ,\]
for each choice of $l$ and $k$ such that $ l-k \neq 0,  1$.
\end{thm}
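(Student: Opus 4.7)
The strategy is to combine Theorem~\ref{thm:double and MVss} and Theorem~\ref{thm:uber=MV}. The former identifies $\DH_{-k,2l}(\cZ_K)$ with $\overline{E}^2_{m-l-1,l-k-1}(K)$, the second page of the \emph{reduced} augmented Mayer--Vietoris spectral sequence of $K$ with respect to the anti-star cover~$\Uast$. The latter identifies $\BH^j_i(K)$ with $E^2_{m-j-1,i}(K)$, the second page of the \emph{unreduced} augmented Mayer--Vietoris spectral sequence for the \emph{same} cover. Setting $j=l$ and $i=l-k-1$, the theorem will follow once I prove
\[\overline{E}^2_{m-j-1,i}(K)\cong E^2_{m-j-1,i}(K)\quad\text{for all}\quad i\neq 0,-1.\]

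The heart of the argument is the observation that both first-page differentials are horizontal, of bidegree $(-1,0)$. Consequently, the second-page entries $\overline{E}^2_{p,q}$ and $E^2_{p,q}$ depend only on the respective first-page rows at the fixed $q$. I would then compare these rows term by term: at $p\geq 0$ they read $\bigoplus_{\sigma\in\Nerve_p(\Uast)}\widetilde{\hh}_q(U_\sigma)$ versus $\bigoplus_{\sigma\in\Nerve_p(\Uast)}\hh_q(U_\sigma)$, and the corresponding differentials agree. Since $\widetilde{\hh}_q(Y)=\hh_q(Y)$ for every $q\geq 1$, and both groups vanish for $q\leq -2$, the two rows coincide as chain complexes for every $q\neq 0,-1$. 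The augmentations (the column $p=-1$ with $\widetilde{\hh}_q(K)$ or $\hh_q(K)$, the entry $\overline{E}^1_{m-1,-1}=\Z$, and the row $q=-1$ that is present only in $\overline{E}^1$) all live in rows $q=0$ or $q=-1$, and therefore do not interfere with the rows we care about.

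Combining this comparison with the two input theorems, for any $l,k$ with $l-k\neq 0,1$ we obtain
\[\DH_{-k,2l}(\cZ_K)\cong \overline{E}^2_{m-l-1,l-k-1}(K)\cong E^2_{m-l-1,l-k-1}(K)\cong \BH^l_{l-k-1}(K),\]
which is the second form of the theorem. Reindexing via $j=l$ and $i=l-k-1$ then yields the equivalent form $\BH^j_i(K)\cong \DH_{j-i-1,2j}(\cZ_K)$.

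\textbf{Main obstacle.} The essential work is already packaged in Theorems~\ref{thm:double and MVss} and~\ref{thm:uber=MV}; what remains is bookkeeping about reduced versus unreduced homology. The only genuine subtlety is to pinpoint where the two spectral sequences can disagree: since reduced and unreduced simplicial homology differ only in degree $0$, and the reduced version introduces an extra row at $q=-1$, the comparison forces one to exclude exactly $i=0,-1$. This matches the excluded range $l-k\in\{0,1\}$ in the statement, and is the reason the identification between $\BH$ and $\DH$ cannot be extended across those particular bidegrees by the same argument.
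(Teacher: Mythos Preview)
Your proposal is correct and follows exactly the paper's approach: invoke Theorems~\ref{thm:double and MVss} and~\ref{thm:uber=MV} and compare the two spectral sequences using $\hh_r(Y)\cong\widetilde{\hh}_r(Y)$ for $r>0$. The paper's proof is a single sentence to this effect; your version simply unpacks the row-by-row comparison in more detail (one minor imprecision: the augmentation column $p=-1$ is present for all $q$, not just $q\in\{0,-1\}$, but since $\widetilde{\hh}_q(K)=\hh_q(K)$ for $q\geq 1$ this does not affect the argument).
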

\begin{proof}
If $K = \Delta^{m-1}$, since all homology groups involved vanish, the statement is true -- compare~\cite[Cor.~7.5 and Prop.~7.6]{uberhomology} and~\cite[Prop.~6.1]{LIMONCHENKO2023109274}.
If $K \neq \Delta^{m-1}$, the statement follows from Theorems~\ref{thm:double and MVss} and \ref{thm:uber=MV}, since ${\rm H}_{r}(X) \cong \widetilde{\rm H}_{r}(X)$ for~$r>0$.
\end{proof}

Indeed, the next example shows that the isomorphism between \"uberhomology groups and double homology groups can not be extended to $i=0$ (or, equivalently, $l=k+1$).

\begin{example}
Let $K$ be the boundary of $\Delta^{m-1}$, $m > 2$. From Example~\ref{ex:ubersimplspheres}, the only non-trivial terms in the $0$-degree \"uberhomology of $K$ are $\BH^{m}_{m-2}(K)$ and $\BH^1_0(K)$. 
In accordance with Theorem~\ref{thm:comparison}, we have an isomorphism $\BH^{m}_{m-2}(K) \cong \DH_{-1,2m}(\cZ_K)\cong \Z$. However, $\BH_{0}^{1}(K) \cong \Z$ and is therefore not isomorphic to $\DH_{0,2}(\cZ_K) \cong 0$ -- see Example~\ref{ex:doublehomsimplsphere}.
\end{example}

Despite this, the two homology theories are closely related even for $i=0$. 

\begin{prop}\label{prop:comparechars}
Let $K$ be a (non-empty) and connected simplicial complex with $m$ vertices and no ghost vertices. 
Then, 
\[ \chi(E^1_{*,0}(K)) =  \chi(\overline{E}^1_{*,0}(K)) + (-1)^m\ .\]
\end{prop}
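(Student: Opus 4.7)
The plan is to compare $E^1_{*,0}(K)$ and $\overline{E}^1_{*,0}(K)$ term by term using the elementary identity
\[
\dim H_0(Y) - \dim \widetilde{H}_0(Y) = \begin{cases} 1 & \text{if } Y\neq\emptyset,\\ 0 & \text{if } Y=\emptyset. \end{cases}
\]
Since both augmented first pages (see Figure~\ref{fig:degree-plane}) share the same indexing set of simplices $\sigma$ in the augmented nerve, parametrised by subsets $J\subseteq V(K)$ via $U_\sigma = K[V(K)\setminus J]$, the difference $\chi(E^1_{*,0}(K))-\chi(\overline{E}^1_{*,0}(K))$ collapses to a signed count of those $\sigma$ for which $U_\sigma$ is nonempty.

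Next I would enumerate these contributions according to the column index $p$. For $0\le p\le m-2$, the $p$-simplices of $\Nerve(\Uast)$ are the $(p+1)$-subsets $J\subseteq V(K)$; since $|V(K)\setminus J|\ge 1$, each such $J$ gives $U_\sigma\neq\emptyset$ and hence a contribution of $+1$. The augmentation at $p=-1$ corresponds to the empty intersection $U=K$, which is nonempty since $K$ is connected (this is where connectedness enters), while the augmentation at $p=m-1$ corresponds to the total intersection $U=K[\emptyset]=\emptyset$ and contributes $0$. Assembling,
\[
\chi(E^1_{*,0}(K)) - \chi(\overline{E}^1_{*,0}(K)) \; = \; -1 + \sum_{p=0}^{m-2}(-1)^p\binom{m}{p+1}.
\]

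The remaining step is a standard binomial identity. Setting $k=p+1$ and using $\sum_{k=0}^{m}(-1)^k\binom{m}{k}=0$ to rewrite the truncated alternating sum, one obtains
\[
\sum_{p=0}^{m-2}(-1)^p\binom{m}{p+1} \; = \; -\sum_{k=1}^{m-1}(-1)^k\binom{m}{k} \; = \; 1+(-1)^m,
\]
so the total equals $-1+1+(-1)^m=(-1)^m$, as required. I do not foresee any genuine obstacle here; the only care needed is tracking the augmentation conventions consistently on both sides, in particular noting that the $p=-1$ column contributes $H_0(K)$ in the unreduced version and $\widetilde{H}_0(K)$ in the reduced one, with the discrepancy of $1$ accounted for precisely by the connectedness hypothesis.
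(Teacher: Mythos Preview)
Your proof is correct and follows essentially the same argument as the paper's: both compute the difference termwise via $\mathrm{rank}\,H_0 - \mathrm{rank}\,\widetilde{H}_0 = 1$ for nonempty subcomplexes and then reduce to an alternating binomial sum, with the only cosmetic difference being that the paper indexes by the retained vertex set $I$ (with sign $(-1)^{m-|I|-1}$) while you index by the removed set $J$ via the nerve column $p=|J|-1$. One minor quibble: the $p=-1$ contribution requires only that $K$ be nonempty, not that it be connected, so your parenthetical ``this is where connectedness enters'' overstates the role of that hypothesis.
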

\begin{proof}
It is a well-known fact that, for each non-empty simplicial complex $K'$:
\[ \hh_{0}(K') \cong \widetilde{\hh}_{0}(K') \oplus \Z\ ,\]
as $\Z$-modules, and $\hh_{0}(\emptyset) = \widetilde{\hh}_{0}(\emptyset) = 0$. Thus, 
\[ \sum_{I\subseteq V(K)} (-1)^{m-\vert I \vert -1}  {\rm rank} (\hh_{0}(K[I])) = \sum_{\tiny\begin{matrix}I\subseteq V(K)\\ I\neq \emptyset \end{matrix}} (-1)^{m-\vert I \vert -1} {\rm rank} (\hh_{0}(K[I])) =\]
\[=\sum_{\tiny\begin{matrix}I\subseteq V(K)\\ I\neq \emptyset \end{matrix}} (-1)^{m-\vert I \vert -1}  {\rm rank} (\widetilde{\hh}_{0}(K[I])) + \sum_{\tiny\begin{matrix}I\subseteq V(K)\\ I\neq \emptyset \end{matrix}} (-1)^{m-\vert I \vert -1} =\]
\[=\sum_{\tiny\begin{matrix}I\subseteq V(K)\\ I\neq \emptyset \end{matrix}} (-1)^{m-\vert I \vert -1} {\rm rank} ( \widetilde{\hh}_{0}(K[I])) + (-1)^m\ ,\]
which concludes the proof.
\end{proof}

We provide one further example featuring cycles.

\begin{example}
Let $K\coloneqq C_n$ be the cycle with $n\geq 5$ vertices. Either using Theorem~\ref{thm:uber=MV}, or by direct computation (see also~\cite[Proposition~5.6]{domination}), we get: 
\[
\BH_i^j(K)=\begin{cases}
\Z & \text{for } (i,j)=(1,n), (0,n-2)\\
0 & \text{otherwise}
\end{cases}
\]
Moreover, by Example~\ref{ex:doublecycle} (and the fact that all cohomology groups are free), we get:
\[
\dH_{-k,2l}(\cZ_K)=\begin{cases}
\Z & \text{for } (-k,2l)=(0,0), (-1,4), (-n+3,2(n-2)), (-n+2,2n)\\
0 & \text{otherwise}
\end{cases}
\]
Theorem~\ref{thm:comparison} makes $\BH_1^n(K)$ correspond with $\dH_{2-n,2n}(\cZ_K)$, and  $\BH_0^{n-2}(K)$  with $\dH_{3-n,2(n-2)}(\cZ_K)$. Again, the difference between these cohomology groups arises because the spectral sequences used to compute them are the augmented unreduced and augmented reduced Mayer-Vietoris spectral sequences, respectively.
\end{example}

\begin{example}\label{ex:icosahedron}
Consider the icosahedron $\mathcal{I}$; its double homology (with $\F_2$ coefficients) was computed in~\cite[Sec.~4.1]{ruiz2024sphere}:
$$\DH_{-k,2l}(\mathcal{Z}_\mathcal{I};\F_2) = \begin{cases}
\F_2 & \text{ if }(-k,2l) = (0,0),(-1,4),(-8,20),(-9,24)\\
\F_2^{10}& \text{ if }(-k,2l) = (-4,10),(-5,14)\\
0& \text{ otherwise }
\end{cases}$$
We can reproduce the same computation on the \"uberhomological side using J.~Frank's Python implementation~\cite{githububerjulius}:
$$\BH^j_{i}(\mathcal{I};\F_2) = \UH_{0,i}^{j}(\mathcal{I};\F_2) = \begin{cases}
\F_2 & \text{ if }(i,j) = (1,10), (2,12)\\
\F_2^{10}& \text{ if }(i,j) = (0,5),(1,7)\\
0& \text{ otherwise }
\end{cases}$$
Furthermore, all other \"uberhomology groups are trivial.
\end{example}

Recall that if $\tG$ is a connected simple graph, then a subset $S$ of vertices is called \emph{dominating} if each vertex of $\tG$ either belongs to $S$ or shares an edge with
some element of $S$. We say that $S$ is \emph{connected} if it spans a connected subgraph of $\tG$. Recall also that the connected domination polynomial is the defined as $$D_c(\tG)(t)\coloneqq \sum_S t^{|S|}\in \Z[t],$$ where $S$ ranges across all connected dominating sets of a simple and connected graph $\tG$ (see, \textit{e.g.}~\cite{domination}). 

\begin{cor}\label{cor:condom}
Let $K \neq \Delta^{m-1}$ be a connected simplicial complex with $m$ vertices; then
\[ \sum_{k} (-1)^k \mathrm{rk}( {\rm DH}_{-k,2(k+1)}(\cZ_K))= D_{c}(K^{(1)})(-1) + (-1)^{m+1}\ ,\]
where $K^{(1)}$ is the $1$-skeleton of $K$.
\end{cor}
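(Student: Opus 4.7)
The plan is to express the alternating sum on the left-hand side as an Euler characteristic of the $q = 0$ row of the reduced augmented Mayer-Vietoris spectral sequence, translate it to the unreduced side via Proposition~\ref{prop:comparechars}, and then identify the resulting sum with the connected domination polynomial evaluated at $-1$ through a combinatorial identity.

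I would begin by specialising Theorem~\ref{thm:double and MVss} to $l = k+1$, giving $\DH_{-k, 2(k+1)}(\cZ_K) \cong \overline{E}^2_{m-k-2, 0}(K)$. Since the Euler characteristic of a row is preserved by passing from $\overline{E}^1$ to $\overline{E}^2$, and the reindexing $p = m-k-2$ introduces a global sign $(-1)^m$, one obtains
\[
\sum_{k} (-1)^k \operatorname{rank}\bigl(\DH_{-k, 2(k+1)}(\cZ_K)\bigr) \;=\; (-1)^m\, \chi(\overline{E}^1_{*, 0}(K)).
\]
Proposition~\ref{prop:comparechars} then rewrites this via $\chi(\overline{E}^1_{*, 0}(K)) = \chi(E^1_{*, 0}(K)) - (-1)^m$, and from the definition $E^1_{p, 0}(K) = \bigoplus_{|I| = m - p - 1} \hh_0(K[I])$ the remaining Euler characteristic equals
\[
\chi(E^1_{*, 0}(K)) \;=\; \sum_{I \subseteq V}(-1)^{m - |I| - 1}\, c(K[I]),
\]
where $c(K[I])$ denotes the number of connected components of $K[I]$, with the convention $c(\emptyset) = 0$.

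The key combinatorial input is the identity
\[
\sum_{I \subseteq V} (-1)^{|I|}\, c(K[I]) \;=\; D_c(K^{(1)})(-1),
\]
which I would prove by swapping the order of summation. Writing $c(K[I])$ as the number of subsets $C \subseteq I$ such that $K^{(1)}[C]$ is a connected component of $K^{(1)}[I]$, one observes that for a fixed $C$ with $K^{(1)}[C]$ connected the admissible $I$'s have the form $I = C \sqcup J$ with $J \subseteq V \setminus (C \cup N(C))$, where $N(C)$ is the set of vertices of $V \setminus C$ adjacent to $C$ in $K^{(1)}$. The inner sum $\sum_J (-1)^{|J|}$ vanishes unless $C \cup N(C) = V$, which is exactly the condition that $C$ be a dominating set; only connected dominating sets therefore contribute, producing $D_c(K^{(1)})(-1)$.

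Assembling the three ingredients yields the stated formula. I expect the main obstacle to lie not in any conceptual subtlety, but in the combinatorial identity above together with the careful sign bookkeeping between the two reindexings $p \leftrightarrow |I|$ and $p \leftrightarrow k$ and the $(-1)^m$ correction from Proposition~\ref{prop:comparechars}, so as to recover the term $(-1)^m$ on the right-hand side of the statement.
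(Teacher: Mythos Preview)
Your overall strategy is exactly the one the paper has in mind: the corollary is stated without proof there, the implicit argument being Theorem~\ref{thm:double and MVss} together with Proposition~\ref{prop:comparechars} and the categorification result from \cite{domination}, which identifies the Euler characteristic of the $q=0$ row with $D_c(K^{(1)})(-1)$. The only genuine difference is that you supply a self-contained proof of the combinatorial identity $\sum_{I\subseteq V}(-1)^{|I|}c(K[I]) = D_c(K^{(1)})(-1)$ via the neighbourhood-complement argument, whereas the paper simply invokes \cite{domination}. That addition is a nice and correct touch.

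There is, however, a real gap in your final sentence ``assembling the three ingredients yields the stated formula''. If one actually carries out the bookkeeping you outline, one finds
\[
\sum_k(-1)^k\operatorname{rank}\DH_{-k,2(k+1)}(\cZ_K)=(-1)^m\chi(\overline{E}^1_{*,0})=(-1)^m\bigl(\chi(E^1_{*,0})-(-1)^m\bigr)=(-1)^m\chi(E^1_{*,0})-1,
\]
and since $\chi(E^1_{*,0})=\sum_I(-1)^{m-|I|-1}c(K[I])=(-1)^{m-1}D_c(K^{(1)})(-1)$, the left-hand side equals $-D_c(K^{(1)})(-1)-1$, \emph{not} $D_c(K^{(1)})(-1)+(-1)^m$. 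A direct check on $K=C_6$ confirms this: the only contributions come from $k=1$ and $k=3$ (the classes at $(-1,4)$ and $(-3,8)$), giving $-2$, while $D_c(C_6)(-1)=1$ so the stated right-hand side is $+2$. Thus your approach is sound, but it proves the formula $-D_c(K^{(1)})(-1)-1$; the discrepancy lies in the printed statement rather than in your method, and you should flag the corrected constant rather than assert that the signs work out.
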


\begin{proof}
By Theorem~\ref{thm:comparison}, we have ${\rm DH}_{-k,2(k+1)}(\cZ_K) \cong \BH_{0}^{k+1}(K)$, that is the \"uberhomology groups of $K$ in simplicial and weight degree 0. In~\cite{domination}, this restriction of \"uberhomology was called \emph{bold homology}. In particular, it was shown in~\cite[Theorem~1.2]{domination} that the Euler characteristic of bold homology computes the evaluation of the connected domination polynomial at $-1$. Now, by Proposition~\ref{prop:comparechars}, we have that the the sum 
$\sum_{k} (-1)^k \mathrm{rk}( {\rm DH}_{-k,2(k+1)}(\cZ_K))$ is equal to $\chi({E}^1_{*,0}(K)) + (-1)^{m+1}$. But $\chi({E}^1_{*,0}(K))$ is the Euler characteristic of the bold homology of~$K$. To conclude, by \cite[Lemma~2.6]{MV}, we have that the bold homology of $K$ is the bold homology of its underlying 1-skeleton $K^{(1)}$. 
\end{proof}

We note here that the augmented reduced Mayer-Vietoris spectral sequence can also be effectively used to provide alternative proof of other statements in \cite{LIMONCHENKO2023109274},  and to provide computations in double homology. As an example, \cite[Theorem~6.8]{LIMONCHENKO2023109274} can be proved as follows:
\begin{prop}
Let $K\neq \Delta^{m-1}$ be the flag complex of a chordal simple graph on $m$ vertices. Then
        \[
    \dH_{-k,2l}(\cZ_K)=\begin{cases}
        \Z & \text{for } (-k,2l)=(0,0),(-1,4)\\
        0 & \text{otherwise}
    \end{cases}
    \]
\end{prop}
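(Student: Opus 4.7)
The plan is to apply Theorem~\ref{thm:double and MVss} and read off the double homology from the second page of the reduced augmented Mayer--Vietoris spectral sequence with respect to the anti-star cover of $K$. The chordality assumption will be used to make the $\overline{E}^1$-page so sparse that convergence alone dictates the shape of $\overline{E}^2$; essentially no concrete homological computation will be required.

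The key homotopical input is the following standard fact: for any chordal graph $G'$, the flag complex $\mathrm{flag}(G')$ is a disjoint union of contractible spaces, one for each connected component of $G'$. I would prove this by induction on the number of vertices using a perfect elimination ordering. Taking a simplicial vertex $v$, the closed star and the link of $v$ in $\mathrm{flag}(G')$ are both simplices; the pushout $\mathrm{flag}(G') = \mathrm{star}(v) \cup_{\mathrm{link}(v)} \mathrm{flag}(G' - v)$ glues a contractible piece along a contractible piece, and Mayer--Vietoris yields $\widetilde{\hh}_q(\mathrm{flag}(G')) \cong \widetilde{\hh}_q(\mathrm{flag}(G' - v))$ for $q \geq 1$. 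Since induced subgraphs of chordal graphs are chordal, this applies to every $G[I]$, producing $\widetilde{\hh}_q(K[I]) = 0$ for all $q \geq 1$ and every $I \subseteq V(K)$.

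This vanishing means the $\overline{E}^1$-page is concentrated on row $q = 0$, together with the isolated augmentation term $\overline{E}^1_{m-1,-1} = \Z$ (arising from $I = \emptyset$). The column $p = -1$ augmentation $\widetilde{\hh}_*(K)$ also vanishes, because $K$ is (connected) chordal and hence contractible. Since $K \neq \Delta^{m-1}$, the convergence remark in the paper gives $\overline{E}^\infty = 0$. A bidegree check shows that the only higher differential connecting two nontrivial positions is $d_2 \colon \overline{E}^2_{m-1,-1} \to \overline{E}^2_{m-3,0}$: every other cell on row $q = 0$ admits neither an incoming nor an outgoing nonzero higher differential, so it is a permanent cycle and must therefore already vanish on $\overline{E}^2$. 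Convergence then forces $d_2$ to be an isomorphism, giving $\overline{E}^2_{m-3,0} \cong \Z$. Translating via Theorem~\ref{thm:double and MVss}, the surviving terms $\overline{E}^2_{m-1,-1}$ and $\overline{E}^2_{m-3,0}$ correspond respectively to $\dH_{0,0}(\cZ_K)$ and $\dH_{-1,4}(\cZ_K)$, while all remaining groups are trivial.

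The main obstacle is the initial homotopical claim about flag complexes of chordal graphs; while classical, one must justify it cleanly and then verify that removing one vertex at a time interacts correctly with the anti-star cover. Once this is in hand, everything else is pure spectral-sequence bookkeeping, and the convergence principle obviates the need to compute the chain maps on row $q = 0$ by hand.
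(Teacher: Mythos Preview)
Your argument is correct and follows the paper's approach: identify ${\rm DH}$ with $\overline{E}^2$ of the reduced augmented Mayer--Vietoris spectral sequence, use chordality to concentrate $\overline{E}^1$ on row $q=0$ plus the isolated term $\overline{E}^1_{m-1,-1}=\Z$, and let convergence to zero pin down $\overline{E}^2$. The only difference is that the paper appeals to Proposition~\ref{prop:comparechars} to detect a nonzero class in row $0$, whereas your permanent-cycle argument recovers $\overline{E}^2_{m-3,0}\cong\Z$ directly from the forced isomorphism $d_2$, which is a slightly cleaner endgame.
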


\begin{proof}
Flag complexes of  chordal graphs are contractible and $1$-Leray~\cite[Lemma~3.1]{ChordalContractible}. This means that the homology of all its induced
subcomplexes is trivial for all $i\geq 1$. As a consequence, in the augmented unreduced Mayer-Vietoris spectral sequence, only the $0$-\textit{th} row  is non-trivial; further, the spectral sequence collapses at the third page. Also, the augmented reduced Mayer-Vietoris spectral sequence is non-trivial only in the  $0$-\textit{th} rows and  in position $(m-1,-1)$, corresponding to the empty set. Moreover, in such case the $(-1)$-\textit{th} column is  completely trivial, by the contractibility of $K$. 

By Proposition~\ref{prop:comparechars},  the $0$-\textit{th} row of the (second page of the)  augmented reduced Mayer-Vietoris spectral sequence  has Euler characteristic equal to $(-1)^{m+1}$. Therefore, there must be at least one non-zero class. On the other hand,  there can be at most one such class. In fact, the augmented reduced spectral sequence converges to $0$ by Lemma~\ref{lem:convergence}, and the second differential is the only differential that can kill such classes. Hence, the only non-trivial element is paired with the class corresponding to the empty set by the second differential; this  implies that there is a rank $1$ element in bidegree $(m-3,0)$.  The computation follows.
\end{proof}

Similarly, one could prove an analogous statement for $1$-Leray complexes. In such case, \cite[Theorem~1.2]{MV} implies that the spectral sequence has non-zero term in row $0$ and column $-1$. Such classes are paired up by the anti-diagonal differentials, with a further class appearing at position $(m-3,0)$.  \\

It is also possible to ``go in the opposite direction'', and use Theorem~\ref{thm:comparison} together with (the second part of) Proposition~\ref{prop:2.6} to prove a corresponding \"uberhomological statement:

\begin{cor}\label{cor:uber detects delta}
Let $K$ be a finite simplicial complex with $m$ vertices. Then, $\BH^{*}_{*}(K) \cong \BH^{1}_{0}(K) \neq 0$ if and only if $K \cong \Delta^{m-1}$.  
\end{cor}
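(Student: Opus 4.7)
The plan is to prove both directions using Theorem~\ref{thm:comparison} together with the characterisation of simplices by $\DH^*(\cZ_K) = \Z$ from Proposition~\ref{prop:2.6}.

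The forward direction is a direct computation: every induced subcomplex $K[I]$ of $\Delta^{m-1}$ is either empty or a simplex, hence contractible, so $\BH^j_i(\Delta^{m-1}) = 0$ for $i \geq 1$. In weight zero, the cube cochain complex has $F_0(I) = \hh_0(K[I]) = \Z$ for all $I \neq \emptyset$ (and $F_0(\emptyset) = 0$), and a direct computation yields $\BH^*_0(\Delta^{m-1}) = \Z$ concentrated in degree $j = 1$.

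For the converse, assume $\BH^*_*(K) \cong \BH^1_0(K) \neq 0$. Theorem~\ref{thm:comparison} immediately gives $\DH_{-k,2l}(\cZ_K) = 0$ whenever $l - k \notin \{0, 1\}$. On the diagonal $l = k$, only $\DH_{0, 0} = \Z$ survives, since $\widetilde{\hh}_{-1}(K[I])$ is nontrivial only for $I = \emptyset$. For the diagonal $l = k + 1$, I use the short exact sequence of functors on $B(m)$,
\[0 \to \widetilde{F}_0 \to F_0 \to G \to 0,\]
where $F_0(I) = \hh_0(K[I])$, $\widetilde{F}_0(I) = \widetilde{\hh}_0(K[I])$, and $G(I) = \Z$ for $I \neq \emptyset$ (resp.\ $0$ for $I = \emptyset$). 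One checks $\mathrm{H}^*(G)$ is $\Z$ concentrated in degree $1$; combined with the hypothesis $\BH^j_0 = \mathrm{H}^j(F_0) = 0$ for $j \neq 1$, the associated long exact sequence in cube cohomology yields $\DH_{-k, 2(k+1)} = \mathrm{H}^{k+1}(\widetilde{F}_0) = 0$ for $k \geq 2$, and a four-term exact sequence
\[0 \to \DH_{0, 2} \to \BH^1_0 \to \Z \to \DH_{-1, 4} \to 0.\]

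Finally, I invoke the convergence of the reduced augmented Mayer-Vietoris spectral sequence. By Theorem~\ref{thm:double and MVss}, its second page $\overline{E}^2$ is supported only in rows $q = -1$ (with $\overline{E}^2_{m-1, -1} = \Z$ the unique nontrivial term) and $q = 0$; the only possibly nonzero differential between these rows is $d_2 \colon \overline{E}^2_{m-1, -1} \to \overline{E}^2_{m-3, 0} = \DH_{-1, 4}$, and higher-page differentials vanish for lack of sources or targets. If $K$ were not a simplex, convergence to $0$ would force $\DH_{0, 2} = 0$ (no differential touches it) and $d_2$ to be an isomorphism, hence $\DH_{-1, 4} \cong \Z$; substituting into the four-term sequence above then makes $\BH^1_0 \hookrightarrow \Z$ with cokernel $\Z$, forcing $\BH^1_0 = 0$ and contradicting the hypothesis. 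Hence $K \cong \Delta^{m-1}$. The main obstacle is precisely this last spectral sequence argument, requiring correct identification of the surviving $d_2$ and the convergence statement for the reduced augmented Mayer-Vietoris spectral sequence.
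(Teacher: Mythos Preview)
Your proof is correct, and it takes a genuinely different route from the paper's.

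The paper's argument for the converse invokes an external result, \cite[Theorem~3.1]{domination}, which says that the hypothesis on $\BH^{*}_{0}$ forces the $1$-skeleton $K^{(1)}$ to be the complete graph. From this, $\widetilde{\hh}_0(K[I]) = \widetilde{\hh}_0(K^{(1)}[I]) = 0$ for every $I$, so the entire diagonal $\DH_{-k,2(k+1)}$ vanishes at the chain level; combined with Theorem~\ref{thm:comparison} and the trivial computation on the $l=k$ diagonal, this gives $\DH^*(\cZ_K)=\Z$, and Proposition~\ref{prop:2.6} finishes.

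Your argument instead stays entirely within the spectral-sequence framework of the present paper. The short exact sequence $0\to \widetilde{F}_0\to F_0\to G\to 0$ of functors on $B(m)$ and the resulting long exact sequence in cube cohomology cleanly isolate the remaining unknowns $\DH_{0,2}$ and $\DH_{-1,4}$; you then force their values using convergence of the reduced augmented Mayer--Vietoris spectral sequence (the Remark preceding Theorem~\ref{thm:double and MVss}) rather than Proposition~\ref{prop:2.6}. This is more self-contained --- it does not import the graph-theoretic result from \cite{domination} --- at the cost of a slightly more delicate differential-chasing step. A minor simplification: $\DH_{0,2}=0$ is immediate from the chain level, since $\widetilde{\hh}_0$ of a single vertex vanishes, so the spectral-sequence argument is only really needed to pin down $\DH_{-1,4}\cong\Z$.
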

\begin{proof}
If $K$ is a simplex, then $\BH^{*}_{*}(K) \cong \BH^{1}_{0}(K) \cong \Z$ -- see~\cite[Cor.~7.5 and Prop.~7.6]{uberhomology}.

Assume that $\BH^{*}_{*}(K) \cong \BH^{1}_{0}(K) \neq 0$.
By~Theorem~\ref{thm:comparison}, we have that 
\[ {\rm DH}_{-k,2l}(\cZ_K) \cong \BH_{l-k-1}^{l}(K) \cong 0\ ,\]
for each choice of $l$ and $k$ such that $ l-k \neq 0,  1$.
Now, from~\cite[Theorem~3.1]{domination} it follows that~$K^{(1)} \cong (\Delta^m)^{(1)}$. This allows us to show that the double homology is trivial for $l = k + 1$ as well. Indeed, we have
\[ {\rm H}_{-l+1, 2l}(\cZ_{K}) \cong \bigoplus_{\vert I \vert = l} \widetilde{\rm H}_{0}(K[I]) \cong \bigoplus_{\vert I \vert = l} \widetilde{\rm H}_{0}(K^{(1)}[I]) = 0. \]
Finally, if $l=k$, then ${\rm DH}_{-l, 2l}(\cZ_{K})$ is non-trivial only if $l=0$, where it is isomorphic to ${\rm H}_{0,0}(\cZ_{K}) = \Z$.  
The statement now follows directly from Proposition~\ref{prop:2.6}
\end{proof}

\bibliographystyle{alpha}

\end{document}